\newcommand{\comment}[1]{}
\definecolor{teal}{RGB}{0,128,128}
\definecolor{darkpurple}{RGB}{128,0,128}
\newcommand{\tmod}[1]{{\;\rm (mod\; #1)}}
\newtheorem{theorem}{Theorem}[section]
\newtheorem{lemma}[theorem]{Lemma}
\newtheorem{cor}[theorem]{Corollary}
\theoremstyle{definition}
\newtheorem{defn}[theorem]{Definition}
\def \cH {{\cal H}}
\def \Z {\mathbb Z}
\title {On the Hamilton-Waterloo Problem\\ with cycle lengths of distinct parities}
\author{
A.\ C.\ Burgess \footnotemark[1]  
\and 
P.\ Danziger    \footnotemark[2] 
\and
T.\ Traetta     \footnotemark[3] 
}
\begin{document}

\maketitle

\footnotetext[1]{Department of Mathematics and Statistics, University of New Brunswick, 100 Tucker Park Rd., Saint John, NB  E2L 4L5, Canada. Email: andrea.burgess@unb.ca.}
\footnotetext[2]{Department of Mathematics, Ryerson University, 350 Victoria St., Toronto, ON  M5B 2K3, Canada. Email: danziger@ryerson.ca.}
\footnotetext[3]{Department of Mathematics, Ryerson University, 350 Victoria St., Toronto, ON  M5B 2K3, Canada. Email: tommaso.traetta@ryerson.ca, traetta.tommaso@gmail.com.}

\begin{abstract}
Let $K_v^*$ denote the complete graph $K_v$ if $v$ is odd and $K_v-I$, the complete graph with the edges of a 1-factor removed, if $v$ is even.
Given non-negative integers $v, M, N, \alpha, \beta$, the Hamilton-Waterloo problem asks for a $2$-factorization of  $K^*_v$  into $\alpha$  $C_M$-factors 
and $\beta$ $C_N$-factors.
Clearly, $M,N\geq 3$, $M\mid v$, $N\mid v$ and $\alpha+\beta = \lfloor\frac{v-1}{2}\rfloor$ are necessary conditions. 

Very little is known on the case where $M$ and $N$ have different parities.
In this paper, we make some progress on this case by showing, among other things,
that  the above necessary conditions are sufficient 
whenever $M|N$, $v>6N>36M$, and $\beta\geq 3$. 

%
\end{abstract}

Keywords: 2-Factorizations, Resolvable Cycle Decompositions, Cycle Systems, Generalized Oberwolfach Problem, Hamilton-Waterloo Problem.

\section{Introduction}
As usual, we denote by $V(G)$ and $E(G)$ the {\em vertex set} and {\em the edge set} of 
a simple graph $G$, respectively. Also, we denote by $tG$ the vertex-disjoint union of $t>0$ copies of $G$.

A {\em factor} of $G$ is a spanning subgraph of $G$; in particular, a 1-factor is a factor which is 1-regular and a 2-factor is a factor which is 2-regular and hence consists of a collection of cycles. A $2$-factor of $G$ containing only one cycle is a 
{\em Hamiltonian cycle}.
We denote by $C_\ell$ a cycle of length $\ell$ (briefly, an $\ell$-cycle), by
$(x_0,x_1\ldots, x_{\ell-1})$ the $\ell$-cycle with edges $x_0x_1, x_1x_2, \ldots, x_{\ell-1} x_0$, and by $K_v$ the {\em complete graph} on $v$ vertices. By $K_v^*$ we mean the graph $K_v$ when $v$ is odd and $K_v-I$, where $I$ is a single 1-factor, when $v$ is even.

A 2-factorization of a simple graph $G$ is a set of $2$-factors of $G$ whose edge sets
partition $E(G)$. It is well known that a regular graph has a 2-factorization if and only if every vertex has even degree.
However, if we specify a particular 2-factor, $F$ say, and ask for all the factors to be isomorphic to $F$ the problem becomes much harder. Indeed, if $G\cong K_v^*$, we have the Oberwolfach Problem, which is well known to be hard. A survey of the well-known results on this problem, updated to 2006, can be found in 
\cite[Section VI.12]{Handbook}. For more recent results we refer the reader to 
\cite{Bryant Schar 09,  BryantDanzigerDean, BryantDanzigerPettersson, RiTr11, Traetta 13}.

Given a simple graph $G$ and a collection of graphs $\cH$, an {\em $\cH$-factor of $G$} is a set of vertex-disjoint subgraphs of $G$, each isomorphic to a member of $\cH$, which between them cover every point in $G$. An {\em $\cH$-factorization of $G$} is a set of edge-disjoint $\cH$-factors of $G$ whose edges partition the edge set of $G$. When $\cH$ consists of a single graph $H$, we speak of $H$-factors and $H$-factorizations of $G$ respectively. If $H$ is a Hamiltonian cycle of $G$ and there exists an 
$H$-factorization of $G$ (briefly, a {\em Hamiltonian factorization}), then 
$G$ is called {\em Hamiltonian factorable}. 

We call a factor whose components are pairwise isomorphic a {\em uniform} factor. 
The problem of factoring $K_v^*$ into pairwise isomorphic uniform 2-factors has
been solved \cite{Alspach Haggvist 85, ASSW, Hoffman Schellenberg 91}.
\begin{theorem}[\cite{Alspach Haggvist 85, ASSW, Hoffman Schellenberg 91}]
\label{Hamiltonian}
\label{OP uniform}
Let $v, \ell \geq 3$ be integers.
There is a $C_\ell$-factorization of $K_v^*$ if and only if $\ell \mid v$, except that there is no $C_3$-factorization of $K_{6}^*$ or $K_{12}^*$.
\end{theorem}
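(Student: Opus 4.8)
The necessity is immediate: each $2$-factor appearing in a $C_\ell$-factorization of $K_v^*$ is a vertex-disjoint union of $\ell$-cycles covering all $v$ vertices, so $v$ must be a multiple of $\ell$, and no further restriction on $\ell\geq 3$ arises. I would therefore spend the whole argument on sufficiency. Writing $v=\ell n$, the base of the whole construction is the Hamiltonian case $n=1$: here a $C_\ell$-factorization of $K_\ell^*$ is exactly a decomposition of $K_\ell^*$ into Hamiltonian cycles, which exists for every $\ell$ by the classical Walecki construction (for $\ell$ odd one gets $(\ell-1)/2$ Hamilton cycles of $K_\ell$, and for $\ell$ even one gets $(\ell-2)/2$ Hamilton cycles of $K_\ell-I$). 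These Hamiltonian factorizations serve as the seed ingredients to be "planted" inside larger graphs.

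For $n\geq 2$ the plan is a recursive construction built on \emph{frames}. A $C_\ell$-\emph{frame of type} $g^u$ is a set of partial $2$-factors of the complete equipartite graph $K_{u\times g}$, each consisting of $\ell$-cycles and spanning exactly the complement of one group, whose edge sets partition $E(K_{u\times g})$. The key engine is the standard \emph{filling construction}: given a $C_\ell$-frame of type $g^u$ together with a $C_\ell$-factorization of $K_{g+1}^*$, one adjoins a common point $\infty$ to every group and, on each group $G_i\cup\{\infty\}$, overlays a copy of the small factorization; the partial frame-factors that miss $G_i$ are completed by the factors supported on $G_i\cup\{\infty\}$, yielding a $C_\ell$-factorization of $K_{gu+1}^*$. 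A parallel (point-free) version combines a frame with a $C_\ell$-factorization of $K_g^*$ to produce $K_{gu}^*$, with care taken so that the removed $1$-factor $I$ is accounted for consistently across groups. In this way the problem for general $v=\ell n$ reduces to (i) the Hamiltonian seeds above and (ii) the existence of suitable $C_\ell$-frames.

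The supply of frames is in turn generated multiplicatively. Starting from a finite collection of small frames constructed directly by the cyclic difference method over $\mathbb{Z}_{g(u-1)}$ (placing one base partial factor and rotating it), I would inflate them using a Wilson-type weighting: a group divisible design with block sizes matching available ingredient frames, each block replaced by a copy of a small frame, produces frames of arbitrarily large type. Combining the weighting step with the base frames and the filling construction covers all admissible pairs $(\ell,n)$, and one then splits into the parities of $\ell$ (and of $v$) to choose group sizes $g$ compatible with $\ell\mid g(u-1)$ and with the presence or absence of $I$.

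I expect the genuine difficulty to lie in two places. The first is assembling enough \emph{base} frames, for each residue class and for both parities of $\ell$, so that the weighting plus filling machinery leaves no gaps; this is where the case analysis becomes heavy and where small orders must be settled by explicit or computer-assisted constructions. The second, and sharpest, is the exceptional non-existence for $\ell=3$: a $C_3$-factorization of $K_v^*$ is precisely a Kirkman triple system when $v$ is odd and a nearly Kirkman triple system when $v$ is even, and the failures at $v=6$ and $v=12$ are exactly the two orders for which no nearly Kirkman triple system exists. Establishing these as true obstructions (rather than artifacts of a construction) requires a separate counting/parity argument specific to triangle factors, and verifying that no analogous obstruction survives for any larger $v$ or any $\ell>3$ is the crux that must be handled before the recursion can be declared complete.
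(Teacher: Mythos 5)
The paper does not prove this statement at all: Theorem~\ref{OP uniform} is quoted verbatim from the literature (Alspach--H\"aggkvist for even cycle lengths, Alspach--Schellenberg--Stinson--Wagner for odd lengths and odd $v$, Hoffman--Schellenberg for $K_{2n}-I$), so there is no internal proof to compare against; what you have written must be judged as a self-contained argument for a deep classical theorem. As such, it is an outline of a strategy rather than a proof, and the gaps are exactly where the theorem is hard. First, the entire supply of ingredients is asserted, not established: you invoke ``a finite collection of small frames constructed directly by the cyclic difference method'' and a Wilson-type weighting to inflate them, but you exhibit no base frame, no group divisible designs with the required block sizes, and no verification that the types produced by weighting, after filling, hit \emph{every} $v=\ell n$ rather than all sufficiently large ones in each residue class. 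Asymptotic existence plus ``small orders settled by explicit or computer-assisted constructions'' is precisely the part of the argument that cannot be waved at, since the theorem claims a complete answer with exactly two exceptions. Second, the even case is underspecified: in the point-free filling that is supposed to produce $K_{gu}^*$, the removed $1$-factor $I$ must be chosen coherently so that the union of the per-group deficiencies is a single perfect matching of $K_{gu}$, and when $\ell$ is even one must also reconcile the parity count $\alpha=(gu-2)/2$ of factors with what the frame plus the small factorizations actually deliver; ``with care taken'' does not discharge this. Third, the non-existence of a $C_3$-factorization of $K_6^*$ and $K_{12}^*$ is asserted via the identification with nearly Kirkman triple systems, but no counting or exhaustive argument is given, and these are genuine theorems (the $v=12$ case in particular is not a two-line parity check).

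A secondary point: the route you propose (frames plus filling plus Wilson weighting) is not the route the cited papers take. The actual proofs are built on factorizations of the lexicographic products $C_\ell[m]$ and of complete equipartite graphs --- essentially the machinery this very paper reuses in Corollary~\ref{LiuGen}, Lemma~\ref{hagg} and Section~\ref{Section C_M[n]}: one writes $K_{\ell n}^*$ as $n$ copies of $K_\ell^*$ together with $K_n[\ell]$, feeds in Walecki-type Hamiltonian decompositions as you do, and resolves the equipartite part directly by difference methods rather than through frames. Your frame-based plan is not wrong in principle (it mirrors the Kirkman/nearly-Kirkman literature for $\ell=3$), but as written it is a research programme, not a proof.
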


Given a graph $G$, we denote by $G[n]$ the {\em lexicographic product} of $G$ with the empty graph on $n$ points. Specifically, the vertex set of $G[n]$ is $V(G)\times \Z_n$ (where $\Z_n$ denotes the cyclic group of order $n$) and 
$(x,i)(y,j)\in E(G[n])$ if and only if $xy \in E(G)$, $i,j\in \Z_n$.
Note that $G[n_1][n_2] \cong G[n_1n_2]$. 
%

The existence problem for a  
$C_\ell$-factorization of the complete equipartite graph has been completely solved
by Liu \cite{Liu00, Liu03}.
\begin{theorem}[\cite{Liu00, Liu03}]
\label{Liu} Let $\ell, t$ and $z$ be positive integers with $\ell\geq 3$.
There exists a $C_\ell$-factorization of $K_t[z]$ if and only if $\ell\mid tz$, $(t-1)z$ is even, 
further $\ell$ is even when $t = 2$, and
$(\ell, t, z) \not\in \{(3, 3, 2), (3, 6, 2), (3, 3, 6), (6, 2, 6)\}$.
\end{theorem}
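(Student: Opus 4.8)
\emph{Proof idea.}
The necessity of the four conditions is immediate. A $C_\ell$-factor partitions the $tz$ vertices of $K_t[z]$ into $\ell$-cycles, forcing $\ell\mid tz$; each vertex has degree $(t-1)z$, which must be even for any $2$-factorization to exist; $K_2[z]$ is bipartite and so contains no odd cycle, whence $\ell$ must be even when $t=2$; and the four sporadic triples can be ruled out by short ad hoc arguments (indeed $K_3[2]\cong K_6^*$ and $K_6[2]\cong K_{12}^*$, so $(3,3,2)$ and $(3,6,2)$ are already excluded by Theorem~\ref{OP uniform}). The bulk of the work is sufficiency, and my plan is to isolate a small family of ``core'' cases, settle these by direct and recursive constructions, and then reach every admissible triple by a single blow-up step.

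The starting observation is that the complete-graph result supplies two infinite families for free: since $K_t[1]=K_t=K_t^*$ when $t$ is odd, and $K_t[2]\cong K_{2t}^*$ for every $t$, Theorem~\ref{OP uniform} gives the required factorizations whenever $z\in\{1,2\}$ (apart from the listed exceptions). To propagate these I would exploit the identity $K_t[zm]\cong K_t[z][m]$: given a $C_\ell$-factorization $\cF$ of $K_t[z]$, blowing up each point by a factor of $m$ turns every $\ell$-cycle of every $F\in\cF$ into a copy of $C_\ell[m]$, so it suffices to have a $C_\ell$-factorization of the thickened cycle $C_\ell[m]$ into $m$ factors. This reduces the whole problem to two ingredients: (i) a \emph{core} factorization of $K_t[z_1]$ with $z_1=\ell/\gcd(t,\ell)$, the smallest part-size for which $\ell\mid tz_1$, and (ii) a \emph{blow-up lemma} asserting that $C_\ell[m]$ has a $C_\ell$-factorization. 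The divisibility bookkeeping is favourable: writing $g=\gcd(t,\ell)$, the hypothesis $\ell\mid tz$ forces $z_1=\ell/g$ to divide $z$, so $m=z/z_1$ is an integer and $K_t[z]\cong K_t[z_1][m]$.

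For the blow-up lemma the case $\ell\ge 4$ is genuinely easier than the original problem, because $C_\ell[m]$ is far sparser than $K_\ell[m]$, and I would construct its factorization directly by a rotational ($\Z_m$- or $\Z_{\ell m}$-based) difference argument while tracking the few small pairs $(\ell,m)$ where it fails. The case $\ell=3$ is special and must be treated on its own, since $C_3=K_3$ makes the lemma collapse onto the theorem's own $t=3$ instance; here I would invoke the classical theory of resolvable triangle decompositions (Kirkman frames, resolvable $3$-GDDs), which is exactly where the sporadic triples $(3,3,2),(3,6,2),(3,3,6)$ originate. The remaining core cases, in particular the bipartite regime $t=2$ (where $K_2[z]=K_{z,z}$, $\ell$ is even, and $(6,2,6)$ lives), I would settle by explicit cyclic constructions over $\Z_{tz}$ with prescribed starters and adders, producing a factorization whose factors are translates of a single base factor; enlarging the number of groups $t$ beyond these seeds is then handled by Wilson's fundamental construction for group-divisible designs together with frame (hole-filling) arguments.

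The main obstacle is twofold. First, the direct core constructions are delicate when $\ell$ is large relative to $t$: the $\ell$-cycles must wind several times through the $t$ groups, and arranging the difference list so that a single short-orbit base factor generates a genuine \emph{resolution} (rather than merely a cycle decomposition) is the technically demanding step, typically requiring separate treatment according to the residues of $\ell$, $t$ and $z$, together with a parity fix when $(t-1)z_1$ happens to be odd (forcing a factor of $2$ to be absorbed into the core rather than into the blow-up, which is always possible since $m$ is then even). Second, one must prove that the four listed triples really admit no factorization and that nothing else does; this amounts to pinning down the exact exception set of the blow-up lemma and verifying a bounded number of genuinely small base cases, the latter most safely dispatched by a direct computer search.
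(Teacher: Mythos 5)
First, a point of context: the paper does not prove Theorem~\ref{Liu} at all --- it is quoted as a known result of Liu \cite{Liu00, Liu03} --- so there is no in-paper argument to measure yours against, and any proof you supply is necessarily ``a different route.'' Judged on its own terms, your outline follows the standard architecture for such results (seed cases plus a lexicographic blow-up), but it has a concrete gap at the central reduction. Your blow-up lemma requires a $C_\ell$-factorization of $C_\ell[m]$, and you describe its failures as ``a few small pairs $(\ell,m)$'' to be tracked. In fact $C_\ell[m]$ admits no $C_\ell$-factorization for \emph{every} odd $\ell$ when $m=2$, and $m=6$ with $\ell$ odd is at best delicate; this is exactly the exception set recorded in Lemma~\ref{C_M[n] n even beta=0} of the present paper. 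Since this is an infinite family rather than a finite list, your reduction to the minimal core $z_1=\ell/\gcd(t,\ell)$ breaks whenever $z\in\{2z_1,6z_1\}$ with $\ell$ odd. Concretely, take $\ell=15$, $t=5$, $z=6$: the theorem asserts a $C_{15}$-factorization of $K_5[6]$ exists, your route writes $K_5[6]\cong K_5[3][2]$ and then needs a $C_{15}$-factorization of $C_{15}[2]$, which does not exist. Your ``absorb a factor of $2$ into the core'' device is invoked only for the parity obstruction (when $(t-1)z_1$ is odd), not for this one, so you must either enlarge the set of core part-sizes to include $2z_1$ and $6z_1$ (substantially increasing the number of direct constructions, and landing squarely on the sporadic exceptions at $z\in\{2,6\}$) or pass through an intermediate factorization with a \emph{different} cycle length, which is an additional idea your sketch does not contain.

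A secondary issue is the necessity of the four exceptional triples, which you describe as ``short ad hoc arguments'' or computer checks. The cases $(3,3,2)$ and $(3,6,2)$ do follow from Theorem~\ref{OP uniform} as you say, and $(6,2,6)$ is a small finite check, but $(3,3,6)$ is not short: a $C_3$-factorization of $K_3[6]$ is a resolvable transversal design ${\rm TD}(3,6)$, equivalently a ${\rm TD}(4,6)$, i.e.\ a pair of orthogonal Latin squares of order $6$, whose non-existence is Tarry's resolution of Euler's problem. Citing that equivalence is fine, but it should be named rather than waved at, since it is the only genuinely nontrivial non-existence statement in the theorem.
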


We provide a straightforward generalization of Theorem \ref{Liu} to
$C_\ell[n]$-factorizations of $K_t[zn]$.

\begin{cor}
\label{LiuGen}
Given four positive integers $\ell, n, t$ and $z$ with $\ell\geq 3$,
there exists a $C_\ell[n]$-factorization of $K_t[z][n]\cong K_t[nz]$ whenever 
$\ell\mid tz$, $(t-1)z$ is even, $\ell$ is even when $t = 2$, and
$(\ell, t, z) \not\in \{(3, 3, 2)$, $(3, 6, 2), (3, 3, 6), (6, 2, 6)\}$.
\end{cor}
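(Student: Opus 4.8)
The plan is to obtain the desired factorization simply by ``blowing up'' a $C_\ell$-factorization of $K_t[z]$ via the lexicographic product with the empty graph on $n$ points. The hypotheses imposed in the statement are exactly those appearing in Theorem \ref{Liu}, so that result yields a $C_\ell$-factorization $\cF = \{F_1, \ldots, F_r\}$ of $K_t[z]$, where $r = (t-1)z/2$ is the number of factors (an integer since $(t-1)z$ is even by hypothesis). Using the isomorphism $K_t[z][n] \cong K_t[nz]$ recorded in the excerpt, it then suffices to check that applying $[n]$ to each $F_k$ converts $\cF$ into a $C_\ell[n]$-factorization of $K_t[z][n]$.

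First I would record the behaviour of the lexicographic product on edges: by definition $(x,i)(y,j) \in E(G[n])$ holds precisely when $xy \in E(G)$, independently of $i,j \in \Z_n$. Consequently the product commutes with vertex-disjoint unions, so that if $F_k$ is the vertex-disjoint union of $\ell$-cycles $C^{(1)}, C^{(2)}, \ldots$, then $F_k[n]$ is the vertex-disjoint union of the graphs $C^{(m)}[n]$. Since each $C^{(m)} \cong C_\ell$, each $F_k[n]$ is a vertex-disjoint union of copies of $C_\ell[n]$ whose vertices exhaust $V(K_t[z]) \times \Z_n$; that is, $F_k[n]$ is a $C_\ell[n]$-factor of $K_t[z][n]$.

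It remains to verify that $\{F_1[n], \ldots, F_r[n]\}$ partitions $E(K_t[z][n])$. Every edge $(x,i)(y,j)$ of $K_t[z][n]$ projects onto an edge $xy$ of $K_t[z]$, which lies in exactly one factor $F_k$ because $\cF$ is a factorization; hence $(x,i)(y,j)$ lies in $F_k[n]$ and in no other $F_{k'}[n]$. As each edge is covered exactly once, the $F_k[n]$ are pairwise edge-disjoint and together cover $E(K_t[z][n])$, so they form the required $C_\ell[n]$-factorization. There is no real obstacle in this argument, which is entirely formal; the only point meriting care is the compatibility of the lexicographic product with the factorization structure, and this reduces directly to the edge-membership criterion above.
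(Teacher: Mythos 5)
Your proposal is correct and follows exactly the paper's own argument: invoke Theorem \ref{Liu} to get a $C_\ell$-factorization of $K_t[z]$ and then expand each point by $n$ to obtain the $C_\ell[n]$-factorization of $K_t[z][n]\cong K_t[nz]$. The only difference is that you spell out the routine verification that the blow-up of each factor is a $C_\ell[n]$-factor and that the blown-up factors partition the edge set, which the paper leaves implicit.
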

\begin{proof}
  Theorem \ref{Liu} guarantees the existence of a
  $C_\ell$-factorization of $K_t[z]$. By expanding each point of this factorization by 
  $N$, we obtain a $C_\ell[n]$-factorization of $K_t[z][n] \cong K_t[nz]$.
\end{proof}

A well-known variant of the Oberwolfach Problem is the Hamilton-Wa\-ter\-loo Problem 
HWP$(G;F,F';\alpha,\beta)$, which
asks for a factorization of a specified graph $G$ into 
$\alpha$ copies of $F$ and $\beta$ copies of  $F'$,
where $F$ and $F'$ are distinct $2$-factors of $G$.
We denote by HWP$(G; F, F')$ the set of $(\alpha, \beta)$ for which there is a solution 
to HWP$(G;F,F';\alpha,\beta)$.
In the case where $F$ and $F'$ are uniform with cycle lengths $M$ and $N$, respectively, we refer to 
HWP$(G;M,N;\alpha,\beta)$ and  HWP$(G; M, N)$ as appropriate. 
Further, if $G=K^*_v$, we refer to HWP$(v;M,N;\alpha,\beta)$ and HWP$(v; M, N)$ respectively. 
We note the following necessary conditions for the case of uniform factors.

\begin{theorem}
\label{nec} Let $G$ be a graph of order $v$, and let 
$M,N, \alpha$ and $\beta$ be  non-negative integers.
In order for a solution of HWP$(G; M,N; \alpha,\beta)$ to exist,
$M$ and $N$ must be divisors of $v$ greater than $2$, and $G$ must be regular of degree $2(\alpha+\beta)$. 
\end{theorem}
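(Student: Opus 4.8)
The plan is to read off all three conditions directly from the defining properties of a solution, namely that it partitions $E(G)$ into $\alpha+\beta$ two-factors, of which $\alpha$ are $C_M$-factors and $\beta$ are $C_N$-factors. No construction is needed: each conclusion follows from a short counting or structural argument, so I do not expect any substantial obstacle here.

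First I would establish regularity by a degree count. Fix a vertex $x\in V(G)$. Each of the $\alpha+\beta$ factors is, by definition, a spanning $2$-regular subgraph of $G$, so $x$ has degree exactly $2$ in every one of them. Since the edge sets of the factors partition $E(G)$, each edge of $G$ incident with $x$ lies in precisely one factor, and hence $\deg_G(x)=2(\alpha+\beta)$. As $x$ was arbitrary, $G$ is regular of degree $2(\alpha+\beta)$.

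Next I would derive the divisibility and size conditions from the cycle structure of the prescribed factors. A $C_M$-factor $F$ is a two-factor of $G$ all of whose components are cycles of length $M$; it therefore partitions the $v$-element set $V(G)$ into parts of size $M$, forcing $M\mid v$. Because a cycle in a simple graph has at least three vertices, we also obtain $M\ge 3$, that is, $M>2$. Applying the same reasoning to a $C_N$-factor $F'$ yields $N\mid v$ and $N>2$.

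The only point requiring a word of care is the boundary behaviour when $\alpha=0$ or $\beta=0$, where one of the two prescribed factor types is not actually used in the decomposition. This is not a genuine gap: the problem HWP$(G;M,N;\alpha,\beta)$ is posed relative to two bona fide two-factors $F$ and $F'$ of $G$, so the existence of a $C_M$-factor and of a $C_N$-factor of $G$ (and thus the conditions $M\mid v$, $N\mid v$ together with $M,N>2$) is already built into the statement of the problem and holds independently of the multiplicities $\alpha,\beta$. I therefore expect the whole argument to be elementary, the main difficulty being merely to state each implication cleanly rather than to overcome any real technical hurdle.
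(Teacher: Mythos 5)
Your argument is correct and is exactly the standard counting/structural argument that the paper has in mind; the paper in fact states Theorem \ref{nec} without proof, treating it as immediate. Your extra remark about the case $\alpha=0$ or $\beta=0$ is consistent with the paper's definition of HWP (which posits $F$ and $F'$ as bona fide $2$-factors of $G$), so there is nothing to add.
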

This problem has received much interest recently.
For more details and some history on the problem, we refer the reader to \cite{BDT1, BDT2}. 
These two papers deal with the case where both $M$ and $N$ are odd positive integers
and provide an almost complete solution to the Hamilton-Waterloo Problem HWP$(v;M,N;\alpha,\beta)$ for odd $v$.
If $M$ and $N$ are both even, then HWP$(v;M,N;\alpha,\beta)$ has a solution except possibly when $\alpha=1$ or 
$\beta=1$ \cite{BryantDanziger}, whereas this problem is completely solved when $M$ and $N$ are even and $M$ is a divisor of $N$ \cite{BryantDanzigerDean}.

In this paper, we deal with the challenging case where $M$ and $N$ have different parities.
In fact, the only known results on ${\rm HWP}(v; M,N; \alpha, \beta)$ when $M\not\equiv N \tmod{2}$ concern
the case $(M,N)=(3,4)$ which is completely solved in 
\cite{BonviciniBuratti,  DanzigerQuattrocchiStevens, OdabasiOzkan, WangChenCao},
and the cases where $(M,N)=(3,v)$ \cite{LeiShen}, $(M,N)=(3,6n)$ \cite{AsplundEtAl} 
or $(M,N)=(4,N)$ \cite{KeranenOzkan, OdabasiOzkan} which are all still open.

In this paper, we make further progress by showing the following.

\begin{theorem}\label{main}
  Let $M,N,v, \alpha,\beta$ be positive integers such that $N>M\geq 3$ and $M$ is an odd divisor of $N$.
  Then, $(\alpha, \beta)\in \mathrm{HWP}(v; M, N)$ if and only if 
  $N\mid v$ and $\alpha+\beta=\lfloor\frac{v-1}{2}\rfloor$ except possibly when at least one of the following conditions holds:
  \begin{enumerate}
  \item $\beta=1$;
  \item $\beta=2$, $N\equiv 2M \tmod{4M}$;
  \item $N\in\{2M, 6M\}$;
  \item $v\in\{N, 2N, 4N\}$;
  \item $(M,v)=(3,6N)$. 
  \end{enumerate}
\end{theorem}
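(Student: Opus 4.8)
The necessity of the stated conditions is immediate: by Theorem \ref{nec} a solution forces $M,N\mid v$ and that $K_v^*$ be regular of degree $2(\alpha+\beta)$, whence $\alpha+\beta=\lfloor(v-1)/2\rfloor$; and since $M\mid N$, the divisibility $N\mid v$ already subsumes $M\mid v$. The weight of the statement therefore lies entirely in the sufficiency, which I would attack with a grouping (group-divisible) construction driven by a single tunable building block that exploits $M\mid N$. Write $r=N/M\ge 2$ and, using $N\mid v$, put $v=tN$. Partition the vertex set into $t$ groups of size $N$; placing the removed $1$-factor either inside the groups (when $N$ is even) or among them, one obtains for $t$ odd the edge-disjoint decomposition $K_v^*\cong K_t[N]\oplus\big(\text{a copy of }K_N^*\text{ on each group}\big)$. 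The plan is to realise the $\alpha$ $C_M$-factors and $\beta$ $C_N$-factors by assembling, group-by-group and block-by-block, factors of these two pieces that are individually switchable between cycle length $M$ and cycle length $N$.

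For the within-group part, Theorem \ref{OP uniform} supplies both a $C_N$-factorization and, since $M\mid N$ with $M\ge 3$ odd (so that the $C_3$-exceptions $K_6^*,K_{12}^*$ are the only danger), a $C_M$-factorization of each $K_N^*$; synchronising the choice across all $t$ groups turns the within-group edges into $\lfloor(N-1)/2\rfloor$ factors of $K_v^*$, each of which may be made a $C_M$- or a $C_N$-factor. For the between-group part I would invoke Corollary \ref{LiuGen} with $\ell=M$ and $n=r$ to obtain a $C_M[r]$-factorization of $K_t[N]\cong K_t[M][r]$, whose hypotheses reduce to $(t-1)M$ being even, i.e.\ $t$ odd since $M$ is odd. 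Each such factor has its components of order $Mr=N$, and the crucial observation is that a single copy of $C_M[r]$ is itself the venue of a small Hamilton--Waterloo problem: being $2r$-regular, it can be split into $a$ $C_M$-factors and $b$ Hamilton ($=C_N$) cycles for suitable $a+b=r$. Establishing the relevant instances of $\mathrm{HWP}(C_M[r];M,N)$ — in particular which small values of $b$ are attainable — is the combinatorial heart of the argument and the step I expect to be hardest.

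Granting such a block lemma, the bookkeeping closes. The $\tfrac{(t-1)M}{2}$ between-group blocks each contribute any number in $\{0,\dots,r\}$ of $C_N$-factors, so together they realise every $\beta$ in $[0,(t-1)N/2]$; adjoining the coarse within-group switch extends this to the full range $[0,\lfloor(v-1)/2\rfloor]$, and the total factor count $\lfloor(N-1)/2\rfloor+\tfrac{(t-1)M}{2}\cdot r$ equals $\lfloor(v-1)/2\rfloor$ exactly. The stated exceptions are then visibly the places where one ingredient breaks down: $t\in\{1,2,4\}$, that is $v\in\{N,2N,4N\}$, either fails the parity requirement ``$t$ odd'' or leaves no between-group graph at all, so these genuinely small orders must be treated by ad hoc direct constructions; $r\in\{2,6\}$, that is $N\in\{2M,6M\}$, degenerates the block lemma, echoing the $z\in\{2,6\}$ exceptions of Theorem \ref{Liu}; $M=3$ with $v=6N$ meets the $C_3$-factorization failures of $K_6^*$ and $K_{12}^*$; and the smallest positive totals $\beta=1$ and $\beta=2$ (with the arithmetic restriction $N\equiv 2M\tmod{4M}$) are exactly the counts the block lemma cannot deliver, since producing a single $C_N$-factor would demand the unsolvable $b=1$ instance at block level.

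I would therefore organise the proof as: (i) the block lemma on $C_M[r]$, pinning down precisely which pairs $(a,b)$ with $a+b=r$ are realisable; (ii) the generic construction above for odd $t\ge 3$; and (iii) a separate family of constructions for even $t$, presumably repairing the failing parity of $K_t[N]$ by routing the removed $1$-factor $I$ through the between-group graph. Steps (i) and (iii), together with the handful of genuinely small base orders, are where I expect the real technical labour to lie: the generic machinery covers almost everything, but the control of \emph{small} numbers of $C_N$-factors and the even-$t$ repair are delicate, and it is precisely their residual failures that surface as the five exceptional families in the statement.
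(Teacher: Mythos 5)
Your skeleton for the case where $t=v/N$ is odd is essentially the paper's own: split $K_v^*$ into complete graphs on the groups plus a complete multipartite part, make the within-group part uniformly $C_M$ or uniformly $C_N$ via Theorem \ref{OP uniform} (the paper's version of your ``coarse switch'' is the threshold $\beta<\gamma+3$ versus $\beta\ge\gamma+3$ with $\gamma=\lfloor(w-1)/2\rfloor$), split the multipartite part into $C_M[n]$-factors via Corollary \ref{LiuGen}, and resolve each block by a Hamilton--Waterloo result for $C_M[n]$. The block lemma you defer to is exactly the paper's Theorem \ref{C_M[n]}, and the bookkeeping you wave at is Lemma \ref{lemma:C_m[gn]-factorization}; both are proved earlier in the paper and could simply be cited. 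Note, however, that your assertion that the blocks ``each contribute any number in $\{0,\dots,r\}$ of $C_N$-factors'' is false as stated: one $C_N$-factor per block is never available, and zero or two fail for certain $n$ with $M$ odd, which is precisely why Lemma \ref{lemma:C_m[gn]-factorization} needs a genuine case analysis (and why $\beta=1$ and the $\beta=2$, $N\equiv 2M\pmod{4M}$ exceptions survive to the final statement) rather than an interval-union argument.

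The genuine gap is your item (iii), the even-$t$ case, which you leave entirely unexecuted. Your proposed repair --- rerouting the removed $1$-factor through the between-group graph so as to fix the parity failure of Corollary \ref{LiuGen} --- is not carried out and is problematic: removing a $1$-factor from $K_t[N]$ breaks the structure that Liu's theorem relies on, and you would still need the resulting graph to decompose into $2n$-regular $C_M[n]$-factors. The paper sidesteps this with a one-line device: when $s=v/N$ is even it takes $t=s/2$ groups of size $w=2N$, so the part size fed to Theorem \ref{Liu} becomes $z=2M$, which is even, and the hypothesis that $(t-1)z$ be even holds for every $t$; the within-group graphs $K_{2N}^*$ still admit both $C_M$- and $C_N$-factorizations by Theorem \ref{OP uniform}. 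This choice is also the true source of the exception $(M,v)=(3,6N)$ --- it is the excluded triple $(\ell,t,z)=(3,3,6)$ of Theorem \ref{Liu}, not the $K_6^*$, $K_{12}^*$ failures you cite --- and of $v=4N$ (there $t=2$, and Corollary \ref{LiuGen} demands $\ell$ even when $t=2$, which fails for odd $M$). Without either the paper's doubling trick or a worked-out alternative, roughly half the admissible orders $v$ are not covered by your argument.
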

In the next section we introduce some tools and provide some powerful methods which we use in Section~\ref{Section C_M[n]} where we prove a result (Theorem~\ref{C_M[n]}) on factorizations of $C_M[n]$, the lexicographic product of an $M$-cycle and the empty graph on $n$ vertices. In Section \ref{Main section}, we prove the main result of this paper, Theorem  \ref{main}.

\section{Preliminaries}
\label{Section prelim}

In this section we state some known results and develop the tools we will need for the 2-factorizations.
We use $[a,b]$ to denote the set of integers from $a$ to $b$ inclusive; clearly, $[a,b]$ is empty when $a>b$. 
  
\subsection{Cayley graphs}
We will make use of the notion of a Cayley graph on an additive group $\Gamma$. 
Given $S\subseteq \Gamma\setminus\{0\}$, the {\em Cayley Graph} cay$(\Gamma, S)$ is a graph with vertex set $\Gamma$ and edge set $\{a(d+a) \mid a\in \Gamma,d\in S\}$.
When $\Gamma = \Z_N$ this graph is known as a {\em circulant graph}.
We note that the edges generated by $d\in S$ are the same as those generated by $-d\in -S$, so that cay$(\Gamma,S) = {\rm cay}(\Gamma, \pm S)$, and that the degree of each point is $|S\cup (-S)|$. 

Given a set $S\subseteq\Gamma$, we denote by
$C_m[S]$ ($m\geq 3$) the graph with point set 
$\mathbb{Z}_m\times \Gamma$ and edges $(i,x)(i+1,d+x)$, $i\in \mathbb{Z}_m$, $x\in\Gamma$ and $d\in S$. In other words, 
$C_m[S]= {\rm cay}(\Z_m\times \Gamma, \{1\}\times S)$; hence, it is $2|S|$-regular.
It is straightforward to see that if $\Gamma$ has order $n$, then $C_{m}[n] \cong C_{m}[\Gamma]$;  
hence, $C_{m}[S]$ is a subgraph of $C_m[n]$.
We will sometimes denote the vertex $(i,x)$ of $C_m[S]$ by $i_x$.

We will make use of the following two results due to Bermond, Favaron and Mah\'{e}o~\cite{Bermond} and Westlund \cite{We14}, which provide sufficient conditions for the existence of a Hamiltonian factorization of a connected Cayley graph of degree 4 and 6.
\begin{theorem}[\cite{Bermond}]
\label{4reg}
Any connected 4-regular Cayley graph on a finite Abelian group has a Hamiltonian factorization.
\end{theorem}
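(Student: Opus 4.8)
The plan is to classify the connected $4$-regular Cayley graphs $G=\mathrm{cay}(\Gamma,S)$ on a finite abelian group $\Gamma$ according to the structure of their symmetric connection set $T=S\cup(-S)$, where $|T|=4$, and then to exhibit in each class a partition of $E(G)$ into two Hamiltonian cycles. Since an element of order $2$ is its own inverse while every other element pairs with its negative, there are exactly three types: (i) $T=\{a,-a,b,-b\}$ with $\mathrm{ord}(a),\mathrm{ord}(b)>2$; (ii) $T=\{c,d,e,-e\}$ with $\mathrm{ord}(c)=\mathrm{ord}(d)=2$ and $\mathrm{ord}(e)>2$; and (iii) $T=\{c_1,c_2,c_3,c_4\}$ with all $c_i$ of order $2$. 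Connectivity of $G$ means precisely $\langle T\rangle=\Gamma$. The substance of the argument lies in Type (i), which I would treat first and in most detail.

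For Type (i), set $A=\langle a\rangle$, $k=\mathrm{ord}(a)$ and $r=|\Gamma|/k$. The $a$-edges split $\Gamma$ into $r$ vertex-disjoint $k$-cycles, one on each coset of $A$; because $\langle a,b\rangle=\Gamma$, the image of $b$ generates the cyclic quotient $\Gamma/A$, so the cosets are $A,A+b,\dots,A+(r-1)b$ and the $b$-edges form perfect matchings joining consecutive cosets cyclically. Hence $G$ is a \emph{twisted torus}: $r$ copies of $C_k$ arranged in a ring and stitched together by $b$-matchings, with the twist recorded by the unique $t\in[0,k-1]$ satisfying $rb=ta$. When $r=1$ the group is cyclic and $G$ is a connected circulant $\mathrm{cay}(\Z_k,\{1,s\})$, which I would handle as a separate base case. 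The key step is to design, in this twisted-torus picture, two spanning subgraphs $H_1,H_2$ partitioning the $a$- and $b$-edges, each $H_i$ traversing portions of the $a$-cycles and jumping between consecutive cosets along $b$-edges in a zig-zag pattern chosen so that the twist $t$ closes $H_i$ into a single cycle.

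When $t=0$ this is precisely the classical Hamilton decomposition of the torus $C_k\,\square\,C_r$ (the Cartesian product of two cycles), so my strategy is to adapt that construction to arbitrary twist. Concretely I would build $H_1$ and $H_2$ from alternating ``rows'' of $a$-edges and ``columns'' of $b$-edges, then analyse how the residues of $t$ and $r$ modulo small integers affect the wrap-around, inserting local re-routings and treating separately the degenerate cases where $k$ is too small to re-route inside an $a$-cycle. I expect this to be the main obstacle: by construction the two candidate subgraphs partition $E(G)$ and are $2$-regular, hence each is a disjoint union of cycles, so the whole difficulty is to force each to be \emph{connected}, i.e.\ a single Hamiltonian cycle rather than several shorter ones. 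This connectivity reduces to a $\gcd$ condition linking $t$, $k$, $r$ and the chosen jump pattern, and pinning it down is where the real case analysis lives.

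Types (ii) and (iii) I would dispatch more briefly, reusing the same idea. In Type (ii) the free pair $\{e,-e\}$ again yields coset cycles of length $\mathrm{ord}(e)$, now linked by the two involutions $c,d$ acting as the two rails of a prism-like structure amenable to the same zig-zag decomposition. In Type (iii) the ambient subgroup is an elementary abelian $2$-group and $G$ is hypercube-like: four independent involutions give the $4$-cube $Q_4$, which is Hamilton-decomposable, while the remaining lower-rank configurations are small enough to verify directly or to fold back into a Type-(i) or prism analysis by pairing two involutions into a $C_4$-structure. Collecting all cases yields a Hamiltonian factorization of every connected $4$-regular abelian Cayley graph.
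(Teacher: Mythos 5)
First, note that the paper does not prove this statement at all: Theorem~\ref{4reg} is imported verbatim from Bermond, Favaron and Mah\'eo \cite{Bermond} and used as a black box, so there is no internal proof to measure your attempt against; it can only be judged on its own merits and against the original source.

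On those terms, your proposal is a plausible roadmap but not a proof, because it defers exactly the step that constitutes the entire content of the theorem. Your reduction of Type (i) to a twisted torus ($r$ cosets of $\langle a\rangle$, each carrying a $k$-cycle, stitched by $b$-matchings, with twist $t$ determined by $rb=ta$) is correct, and it is essentially the setting in which Bermond, Favaron and Mah\'eo work. But at the decisive moment you write that you would ``build $H_1$ and $H_2$ from alternating rows and columns, then analyse how the residues of $t$ and $r$ \ldots{} affect the wrap-around, inserting local re-routings,'' and you yourself identify connectivity of each candidate $2$-factor as ``where the real case analysis lives.'' That case analysis is never carried out: no explicit zig-zag pattern is given, the $\gcd$ condition governing when a candidate splits into several cycles is not stated, the re-routings are not specified, and the degenerate cases ($k$ small, $r=1$ i.e.\ the connected $4$-regular circulant case, which is itself nontrivial) are only flagged as ``separate base cases.'' Since a $2$-regular spanning subgraph is automatically a union of cycles, producing two edge-disjoint $2$-factors is trivial; the theorem is precisely the assertion that the partition can always be arranged so that both are single cycles, and that assertion is nowhere established in your argument. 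Types (ii) and (iii) inherit the same gap (and Type (ii) is structurally richer than a ``prism'': the quotient by $\langle e\rangle$ can be $\Z_2\times\Z_2$, not just $\Z_2$). To make this a proof you would either have to execute the full construction and connectivity analysis --- which is the substance of the eleven-page argument in \cite{Bermond} --- or simply cite that paper, as the authors do.
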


\begin{theorem}[\cite{We14}]
\label{6reg}
If $X = \mathrm{cay}(A, \{e_1, e_2, e_3\})$ is a 6-regular Cayley graph, $A$ is an abelian group of even order 
generated by both $\{e_1,e_2\}$ and $\{e_2, e_3\}$, and $e_2$ has index at least four in $A$, then $X$
has a Hamiltonian factorization.
\end{theorem}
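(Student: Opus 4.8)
The graph $X$ is $6$-regular, so a Hamiltonian factorization is exactly a partition of $E(X)$ into three edge-disjoint spanning cycles. The plan is to exploit the coset structure determined by $e_2$. Write $B=\langle e_2\rangle$ and $m=|e_2|$, so $B\cong\Z_m$ and, by hypothesis, $k:=[A:B]\geq 4$. Since $\{e_1,e_2\}$ and $\{e_2,e_3\}$ each generate $A$, the images of $e_1$ and of $e_3$ each generate the quotient $A/B$; hence $A/B$ is cyclic of order $k$, and I may label its cosets by $\Z_k$ so that adding $e_1$ advances the coset index by a generator and adding $e_3$ advances it by another generator. The $e_2$-edges then decompose into $k$ disjoint $m$-cycles $Z_0,\dots,Z_{k-1}$, one inside each coset, while the $e_1$- and $e_3$-edges run between cosets. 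In these coordinates $X$ is a $\Z_m$-cover (a voltage lift) of a small base graph on the $k$ cosets, and each Hamiltonian cycle of $X$ projects to a closed walk that sweeps through all $k$ cosets.

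First I would record the available base structure. Because $\mathrm{cay}(A,\{e_1,e_2\})$ and $\mathrm{cay}(A,\{e_2,e_3\})$ are connected $4$-regular Cayley graphs on an abelian group, Theorem~\ref{4reg} gives each of them a Hamiltonian factorization into two spanning cycles. These furnish concrete skeletons that already thread all $k$ coset-cycles $Z_j$ together using only $e_2$- and $e_1$- (resp. $e_2$- and $e_3$-) edges, and the remaining task is to redistribute edges among three cycles so that all three generators are used and each part is a single spanning cycle. The construction I would then pursue builds the three cycles directly from the coset cycles: each target cycle $H_r$ ($r=1,2,3$) is assembled by traversing arcs of the $Z_j$ and hopping between cosets along a prescribed pattern of $e_1$- and $e_3$-edges, the patterns chosen so that (i) the three edge sets partition $E(X)$, forcing each $Z_j$ to be cut into arcs distributed among $H_1,H_2,H_3$ and each between-coset matching to be split correspondingly, and (ii) the net \emph{vertical} displacement of $H_r$ around the cosets is an element of $B$ that \emph{generates} $B\cong\Z_m$. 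Condition (ii) is precisely what makes the lift connected, i.e.\ what guarantees that $H_r$ is a single Hamiltonian cycle rather than a union of shorter cycles; it is controlled by how many $e_1$- versus $e_3$-edges $H_r$ uses and by the voltages these carry.

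The main obstacle is exactly this simultaneous control of connectivity for all three cycles together with the global edge-partition constraint. The hypothesis $k\geq 4$ is what supplies enough cosets — and hence enough independent between-coset transitions — to route three cycles in parallel while leaving each coset-cycle $Z_j$ splittable into the arcs required by (i); with fewer cosets there is not enough room to realize all three net-voltage conditions in (ii) at once. The even-order hypothesis on $A$ enters the parity bookkeeping that decides which residues the net voltages can attain, ensuring the three generating-voltage requirements can be met simultaneously rather than clashing modulo $2$. Two further points need care: the extension of $\Z_k$ by $B$ may be non-split, so that $ke_1\in B$ is a nonzero twist, which I would absorb by choosing coset representatives adapted to $e_1$ and folding the resulting voltage into the patterns; and the finitely many smallest admissible values of $k$ and $m$ may have to be checked directly to seed the general construction.
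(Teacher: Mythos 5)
You should first note that the paper contains no proof of Theorem~\ref{6reg} at all: it is quoted verbatim from Westlund \cite{We14}, where the proof occupies an entire research article. So the comparison here is with Westlund's construction, and against that standard your proposal has a genuine gap. Your opening moves are correct and match the natural framework: with $B=\langle e_2\rangle$ of order $m$ and index $k\geq 4$, the quotient $A/B$ is cyclic because the images of $e_1$ (and of $e_3$) generate it, the $e_2$-edges split into $k$ disjoint $m$-cycles $Z_0,\dots,Z_{k-1}$, and one can view $X$ as a $B$-lift over the cosets. But everything after this setup is a description of what a proof would have to accomplish rather than an argument: the ``prescribed patterns'' of $e_1$- and $e_3$-hops are never exhibited, the simultaneous satisfiability of your conditions (i) and (ii) is never established, the roles of the two hypotheses ($|A|$ even, index of $e_2$ at least four) are asserted via ``parity bookkeeping'' and ``enough room'' without any computation, and the base cases are explicitly deferred to an unperformed check. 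Since the entire content of the theorem is exactly this construction, what you have written is an outline of the difficulty, not a proof of the statement.

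Two specific points would also fail as written. First, the appeal to Theorem~\ref{4reg} cannot ``furnish skeletons'' in any usable sense: $\mathrm{cay}(A,\{e_1,e_2\})$ and $\mathrm{cay}(A,\{e_2,e_3\})$ are not edge-disjoint---they share all $mk$ edges generated by $e_2$---so their two Hamiltonian factorizations neither combine nor partition $E(X)$, and your proposal never says how they would actually enter the construction. Second, your connectivity criterion (ii) is incorrect in the generality you need. If each target cycle $H_r$ traversed exactly one arc per coset in cyclic order, then the ``net vertical displacement generates $B$'' test would indeed decide whether the lift is a single cycle. But your condition (i) forces each $Z_j$ to be cut into several arcs distributed among $H_1,H_2,H_3$, so each $H_r$ visits cosets multiple times and is not the lift of a Hamiltonian cycle of the quotient; a $2$-regular subgraph assembled this way can be disconnected even when some net displacement generates $B$, because the number of its components depends on the full arc-and-hop permutation structure, not on a single voltage sum. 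Controlling precisely this---three interleaved cycles, each provably connected, jointly partitioning $E(X)$, with the even-order and index hypotheses doing identifiable work---is the substance of Westlund's proof, and it is missing here.
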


We use these two results to show the existence of a hamiltonian factorization of a 
special connected $6$-regular subgraph of $C_M[n]$.
\begin{lemma}\label{6 regular}
  Let $n\geq 4$ be even and let $M\geq 3$ be such that $Mn\equiv 0\tmod{4}$. 
  Then, $C_M[\{\frac{n}{2}-1, \frac{n}{2}, \frac{n}{2}+1\}]$
  factorizes into three $C_{Mn}$-factors.
\end{lemma}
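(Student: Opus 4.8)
The plan is to realise $C_M[S]$, with $S=\{\frac n2-1,\frac n2,\frac n2+1\}$, as a Cayley graph and then invoke the Hamiltonian-factorization results of Theorems~\ref{4reg} and~\ref{6reg}. Taking $\Gamma=\Z_n$, we have $C_M[S]={\rm cay}(\Z_M\times\Z_n,\{e_1,e_2,e_3\})$ with $e_1=(1,\tfrac n2-1)$, $e_2=(1,\tfrac n2)$, $e_3=(1,\tfrac n2+1)$. The three generators have distinct second coordinates, and none equals the inverse of another since $1\neq M-1$ in $\Z_M$ (as $M\geq 3$); hence this is a $6$-regular graph on $Mn$ vertices. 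Consequently each $C_{Mn}$-factor we seek is simply a Hamiltonian cycle, and it suffices to produce a Hamiltonian factorization into three such cycles.

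The main case is to apply Theorem~\ref{6reg} with $A=\Z_M\times\Z_n$. The group is abelian of even order $Mn$, since $n$ is even. Because $e_2-e_1=e_3-e_2=(0,1)$ generates $\{0\}\times\Z_n$, and then $e_2-\tfrac n2(0,1)=(1,0)$, both $\{e_1,e_2\}$ and $\{e_2,e_3\}$ generate $A$. Finally, the order of $e_2=(1,\tfrac n2)$ is ${\rm lcm}(M,2)$, so its index in $A$ equals $n$ when $M$ is even and $\tfrac n2$ when $M$ is odd. In the former case the index is $n\geq 4$; in the latter the hypothesis $Mn\equiv 0\pmod 4$ forces $4\mid n$, so the index $\tfrac n2$ is at least $4$ whenever $n\geq 8$. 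Thus Theorem~\ref{6reg} delivers the desired factorization in every case except $M$ odd with $n=4$.

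The leftover case $M$ odd, $n=4$ (so $S=\{1,2,3\}$) I would handle by peeling off one factor explicitly and applying Theorem~\ref{4reg} to the remainder. Write $C_M[\{1,2,3\}]=C_M[\{1\}]\cup C_M[\{2,3\}]$. Since $M$ is odd, $(1,1)$ has order ${\rm lcm}(M,4)=4M=|A|$, so $C_M[\{1\}]={\rm cay}(A,\{(1,1)\})$ is a single Hamiltonian cycle. The complementary graph $C_M[\{2,3\}]={\rm cay}(A,\{(1,2),(1,3)\})$ is connected and $4$-regular (indeed $(1,3)-(1,2)=(0,1)$ together with $(1,2)$ generate $A$), so Theorem~\ref{4reg} factorises it into two Hamiltonian cycles. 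Together these give three $C_{4M}$-factors, completing this case.

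The only genuinely delicate point is matching the hypotheses of Theorem~\ref{6reg}: the index of $e_2$ depends on the parity of $M$, and it is precisely this computation that isolates the single small configuration $(M\text{ odd},\,n=4)$ not covered by the $6$-regular result. I expect this index bookkeeping, together with the ad hoc $n=4$ decomposition via Theorem~\ref{4reg}, to be the crux of the argument; the remaining generation and connectivity checks are direct verifications.
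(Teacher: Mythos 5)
Your proposal is correct and follows essentially the same route as the paper's own proof: realise the graph as $\mathrm{cay}(\Z_M\times\Z_n,\{e_1,e_2,e_3\})$, verify the generation hypotheses and the index of $\langle e_2\rangle$ to apply Theorem~\ref{6reg}, and handle the single leftover case ($M$ odd, $n=4$) by splitting off the Hamiltonian cycle $C_M[\{1\}]$ and applying Theorem~\ref{4reg} to $C_M[\{2,3\}]$. The index computation and the identification of the exceptional case match the paper exactly.
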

\begin{proof}
  We recall that $C_M[\{\frac{n}{2}-1, \frac{n}{2}, \frac{n}{2}+1\}]=cay(\Z_M\times\Z_n, \{e_1,e_2,e_3\})$ where $(e_1, e_2, e_3) = ((1,\frac{n}{2}-1), (1,\frac{n}{2}), (1,\frac{n}{2}+1))$.
  
  We first note that for any $x\in \Z_n$ the set $\{(1,x), (1,x+1)\}$ is a system of generators of 
  $\Z_M\times\Z_n$. In fact, $(0,1)=(1,x+1)-(1,x)$ and $(1,0)=(x+1)(1,x)-x(1,x+1)$; therefore, any element
  of $\Z_M\times\Z_n$ is a linear combination of $\{(1,x), (1,x+1)\}$. It then follows that
  both $\{e_1, e_2\}$ and $\{e_2, e_3\}$ generate $\Z_M\times\Z_n$, hence 
  $C_M[\{\frac{n}{2}-1, \frac{n}{2}, \frac{n}{2}+1\}]$ is a connected $6$-regular graph.   
  
  We denote by $\langle e_2\rangle$ the subgroup of $\Z_M\times\Z_n$ generated by $e_2$, and
  by 
  $|\Z_M\times\Z_n:\langle e_2\rangle|$ the index of $\langle e_2\rangle$ in 
  $\Z_M\times\Z_n$. 
  It is not difficult to check that 
  $|\Z_M\times\Z_n:\langle e_2\rangle|= n$ or $\frac{n}{2}$ according to whether $M$ is even or odd. 
  Since by assumption $Mn\equiv 0\tmod{4}$ and $n\geq 4$, we have that 
  $|\Z_M\times\Z_n:\langle e_2\rangle|\geq 4$ when either $M$ is even or $M$ is odd and $n\neq 4$;
  in these cases,
  the assertion follows from Theorem \ref{6reg}. If $M$ is odd and $n=4$, then
  $C_M[\{1,2,3\}]$ can be decomposed into
  $C_M[\{1\}]$, which is a Hamiltonian cycle, and $C_M[\{2,3\}]$ which is a connected $4$-regular Cayley graph and, by Theorem
  \ref{4reg}, it has a Hamiltonian factorization, and this completes the proof.
\end{proof}

\subsection{Constructing factors of $C_M[n]$}

In Section 3 we will make use of the following result which
provide sufficient conditions for the existence of a solution to HWP($C_{\ell}[T]; g\ell, h\ell; \alpha, |T|-\alpha$), where $T$ is a subset of $\Gamma=\Z_{n}$ and $g, h$ are positive divisors of $n$. This result 
 is proven in \cite{BDT2} for an arbitrary group $\Gamma$.

\begin{theorem}[Theorem 2.9, \cite{BDT2}]
\label{matrix}
Let $n$ be a positive integer, and let $g$ and $g'$ be positive divisors of $n$. Also, 
let $T$ be a subset of $\Z_n$ and  $\ell\geq3$. Suppose there exists a  
$|T| \times \ell$ matrix $A=[a_{ij}]$ with entries from $T$ satisfying the following properties:
\begin{enumerate}
    \item 
    \label{constructionD cond 1}
    $\alpha$ rows of $A$ have sum an element of order $g$ in $\Z_n$, and the remaining $|T|-\alpha$ rows
    have sum an element of order $g'$ in $\Z_n$;
    \item
    \label{constructionD cond 2}
	 each column of $A$ is a permutation of $T$.
\end{enumerate}
Then $(\alpha, |T|-\alpha)\in$ HWP($C_{\ell}[T]; g\ell, g'\ell)$. 
Moreover, if we also have that:
\begin{enumerate}
\setcounter{enumi}{2}
    \item 
    \label{constructionD cond 3}
	$T$ is closed under taking negatives,
\end{enumerate}  
  then $(\alpha, |T|-\alpha)\in$ HWP($C_{m}[T]; g m, g' m)$
 for any $m \geq \ell$ with $m\equiv \ell \pmod{2}$.
\end{theorem}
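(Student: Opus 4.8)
The plan is to read an explicit 2-factorization directly off the matrix $A$, interpreting \emph{each row} as a single 2-factor of $C_\ell[T]$.

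First I would fix a row $r=(a_{r,0},\dots,a_{r,\ell-1})$ and let $F_r$ be the spanning subgraph whose edges are $j_x\,(j+1)_{x+a_{r,j}}$ for all $j\in\Z_\ell$ and all $x\in\Z_n$ (indices on the first coordinate taken mod $\ell$). Since at each level $j$ the map $x\mapsto x+a_{r,j}$ is a bijection of $\Z_n$, every vertex $j_x$ lies on exactly one edge to level $j+1$ and one edge from level $j-1$; hence $F_r$ is a 2-factor. Following the cycle through $j_x$, one full lap around the $\ell$ levels returns to the same level shifted by the row sum $s_r=\sum_{j}a_{r,j}$, i.e.\ to $j_{x+s_r}$, so the cycle closes after exactly $\mathrm{ord}(s_r)$ laps and has length $\ell\cdot\mathrm{ord}(s_r)$. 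By Condition~\ref{constructionD cond 1}, the $\alpha$ rows whose sum has order $g$ give $C_{g\ell}$-factors and the remaining $|T|-\alpha$ rows give $C_{g'\ell}$-factors.

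Next I would verify that the factors $\{F_r\}$ are pairwise edge-disjoint and together cover $E(C_\ell[T])$. Every edge of $C_\ell[T]$ has the form $j_x\,(j+1)_{x+d}$ for a unique level $j$ and a unique $d\in T$, and out of $j_x$ the row $r$ uses precisely the edge with $d=a_{r,j}$. By Condition~\ref{constructionD cond 2} the entries $a_{r,j}$ run through all of $T$ without repetition as $r$ varies, so out of each vertex the factors collectively use each forward edge exactly once; a count of $|T|\cdot\ell n$ edges against the $|T|\ell n$ edges of $C_\ell[T]$ confirms the partition. This establishes $(\alpha,|T|-\alpha)\in$ HWP$(C_\ell[T];g\ell,g'\ell)$.

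For the \emph{moreover} statement I would extend $A$ rather than reprove anything. As $m\equiv\ell\pmod 2$, the difference $m-\ell$ is even, so I would append $(m-\ell)/2$ pairs of columns: in each pair, the first new column is an arbitrary listing $(c_1,\dots,c_{|T|})$ of $T$ and the second is its entrywise negative $(-c_1,\dots,-c_{|T|})$. By Condition~\ref{constructionD cond 3} ($T=-T$) the negated column is again a permutation of $T$, so Condition~\ref{constructionD cond 2} persists, while each row gains $c_i+(-c_i)=0$, leaving every row sum and hence its order unchanged. The enlarged $|T|\times m$ matrix satisfies Conditions~\ref{constructionD cond 1} and \ref{constructionD cond 2}, and applying the first part to it (now with cycle lengths $gm$ and $g'm$) yields $(\alpha,|T|-\alpha)\in$ HWP$(C_m[T];gm,g'm)$. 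The one step needing care is the cycle-length computation: one must check that all cycles of a single $F_r$ genuinely share the length $\ell\cdot\mathrm{ord}(s_r)$ and that this length depends only on the order of the row sum, not on the individual entries; the rest is bookkeeping forced by Conditions~\ref{constructionD cond 2} and \ref{constructionD cond 3}.
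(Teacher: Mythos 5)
The paper does not actually prove this statement: it is imported verbatim from \cite{BDT2} (Theorem 2.9 there), so there is no in-paper proof to compare against. Your argument is correct and is the standard one for this construction: each row of $A$ yields a $2$-factor of $C_\ell[T]$ whose cycles all have length $\ell\cdot\mathrm{ord}(s_r)$ (every cycle must traverse all $\ell$ levels, and uniqueness of the level of each edge uses $\ell\geq 3$), condition~2 makes the $|T|$ row-factors partition $E(C_\ell[T])$, and the ``moreover'' part follows by padding $A$ with $(m-\ell)/2$ column pairs $(c,-c)$, which is legitimate precisely because $T=-T$ and preserves both the column-permutation and row-sum conditions.
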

Note that Theorem \ref{matrix} gives 
a $C_{g\ell}$-factorization of $C_{\ell}[T]$ when $\alpha=|T|$.

We finally state the following well-known result which has been proven in 
\cite{Hagg85} in a much more general form.

\begin{lemma}\label{hagg}
$C_M[2]$ has a Hamiltonian factorization for every $M\geq 3$.
\end{lemma}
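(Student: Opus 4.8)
The plan is to read off $C_M[2]$ as a connected $4$-regular Cayley graph and then invoke Theorem~\ref{4reg}. Using the identification $C_M[2]\cong C_M[\Z_2]=\mathrm{cay}(\Z_M\times\Z_2,\{1\}\times\Z_2)$ recalled earlier, $C_M[2]$ is the Cayley graph on the finite abelian group $\Gamma=\Z_M\times\Z_2$ with connection set $S=\{(1,0),(1,1)\}$. Since $M\geq 3$, the four elements $(1,0),(1,1),(-1,0),(-1,1)$ of $S\cup(-S)$ are pairwise distinct, so the graph is $4$-regular, as expected.

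The only thing left to verify is connectivity, i.e.\ that $S$ generates $\Gamma$. This is immediate: $(1,1)-(1,0)=(0,1)\in\langle S\rangle$, while $(1,0)\in\langle S\rangle$ generates $\Z_M\times\{0\}$; together $(1,0)$ and $(0,1)$ generate all of $\Z_M\times\Z_2$. Hence $C_M[2]$ is a connected $4$-regular Cayley graph on a finite abelian group, and Theorem~\ref{4reg} yields a Hamiltonian factorization, as required.

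With this route there is essentially no obstacle: the whole argument reduces to the trivial connectivity check above, and the real work is done by Theorem~\ref{4reg}. The reason I would avoid a bare-hands construction of the two spanning $2M$-cycles is the parity of $M$. The naive attempt --- take one Hamiltonian cycle to run along the layer $\Z_M\times\{0\}$ and then back along $\Z_M\times\{1\}$, and let the other absorb the remaining edges by zig-zagging between the two layers --- produces a single Hamiltonian cycle when $M$ is even, but splits into two disjoint $M$-cycles when $M$ is odd. Repairing the odd case calls for a separate ad hoc join, exactly the sort of case analysis that Theorem~\ref{4reg} (and, in its general form, the cited result of H\"aggkvist~\cite{Hagg85}) lets us bypass entirely.
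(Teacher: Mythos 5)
Your proof is correct: $C_M[2]\cong\mathrm{cay}(\Z_M\times\Z_2,\{(1,0),(1,1)\})$ is indeed a connected $4$-regular Cayley graph on a finite abelian group for every $M\geq 3$ (the connection set together with its negatives has four distinct elements since $1\neq -1$ in $\Z_M$, and your generation argument is fine), so Theorem~\ref{4reg} applies and yields the two Hamiltonian cycles. The route is, however, different from the paper's: Lemma~\ref{hagg} is not proved in the paper at all but simply quoted as a special case of H\"aggkvist's lemma on cycle decompositions \cite{Hagg85}, which is a direct combinatorial statement about blowing up a cycle by $2$ and is considerably more general (it prescribes the isomorphism types of the resulting $2$-factors, not just the Hamiltonian case). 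What your argument buys is self-containment within the paper's own toolkit --- the same Theorem~\ref{4reg} of Bermond, Favaron and Mah\'eo is already invoked in Lemma~\ref{6 regular}, so no additional external result is needed --- at the cost of resting on a theorem whose proof is far heavier than the elementary edge-chasing behind H\"aggkvist's lemma. Your closing remark about the parity pitfall in the naive two-layer construction is accurate and is precisely the kind of case analysis both cited results absorb.
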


\subsection{Skolem sequences}

In some of our constructions in Section 3 we will make use of \emph{Skolem sequences}, 
which we now define in a slightly more general form.



\begin{defn}[Skolem sequences]
A {\it Skolem sequence} of {\it order $\nu\geq0$} is a sequence of $\nu+1$ pairs
$(a_0,b_0), (a_1, b_1), \ldots, (a_\nu, b_\nu)$ such that
\begin{enumerate}
    \item $b_i-a_i=i$ for every $i\in[0, \nu]$;
    \item $\displaystyle\bigcup_{i=1}^{\nu} \{a_i,b_i\} = [x,x+2\nu]$ for some integer $x$.
\end{enumerate}
In this case, we say that the Skolem sequence covers the interval $[x,x+2\nu]$.
\end{defn}
We point out that in the literature, the term Skolem sequence is only used when $(x,a_0)=(1,2\nu+1)$.
When $(x,a_0)=(1,2\nu)$, such a sequence is usually referred to as a hooked Skolem sequence. 
In all other cases in which $x=1$, one speaks of an $a_0$-extended Skolem sequence.

We recall the following existence results for Skolem sequences.

\begin{theorem}[\cite{Baker}]
\label{skolem} There exists a Skolem sequence of order $\nu$ for every $\nu\geq 0$
\end{theorem}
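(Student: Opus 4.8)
The plan is to exploit the flexibility built into the definition---the covered interval $[x,x+2\nu]$ may begin anywhere, and the degenerate pair $(a_0,b_0)$, which satisfies $a_0=b_0$, may sit at any point of that interval---in order to reduce the statement to two classical families. Both defining conditions are invariant under translating every entry $a_i,b_i$ by a common integer (differences are unchanged, and the covered interval merely shifts), so I may assume $x=1$; the sequence then covers $[1,2\nu+1]$. In this normalization the degenerate pair occupies a single \emph{hole} of $[1,2\nu+1]$, while the remaining $\nu$ pairs, $i\in[1,\nu]$, split the other $2\nu$ integers into pairs whose differences are exactly $1,2,\dots,\nu$. This is precisely an extended Skolem sequence, and the remark preceding the statement isolates the two hole-positions I will use: the hole at $2\nu+1$, giving an ordinary Skolem sequence, and the hole at $2\nu$, giving a hooked Skolem sequence.

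Accordingly I would split on $\nu \bmod 4$. For $\nu\equiv 0,1 \pmod 4$ I set $a_0=b_0=2\nu+1$ and build an ordinary Skolem sequence, i.e.\ a partition of $[1,2\nu]$ into pairs with differences $1,\dots,\nu$; for $\nu\equiv 2,3 \pmod 4$ I set $a_0=b_0=2\nu$ and build a hooked Skolem sequence, i.e.\ a partition of $[1,2\nu+1]\setminus\{2\nu\}$ into such pairs. In either case $\bigcup_{i=0}^{\nu}\{a_i,b_i\}=[1,2\nu+1]$, which is condition~2, while condition~1 holds by construction. Since the two residue conditions are complementary, every $\nu\ge 0$ is covered, the case $\nu=0$ being the trivial sequence $(a_0,b_0)=(1,1)$.

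The real content is the explicit production of the pairings, and this is the step I expect to be the main obstacle. The standard route writes the pairs as a union of a few arithmetic families indexed over subintervals of $[1,\nu]$: one partitions the differences into a ``small'' block and a ``large'' block and gives closed forms in which $a_i$ is affine in $i$ and $b_i=a_i+i$, arranged so that the resulting ranges tile the target set. Verifying that such a system simultaneously realizes each difference $1,\dots,\nu$ exactly once \emph{and} covers each required integer exactly once is a finite but delicate bookkeeping exercise that must be carried out separately in each of the four residue classes, and it is there that all the genuine work lies. These constructions are classical (due to Skolem, and to O'Keefe for the hooked case), so within the present paper one may simply cite them; the only additional observation actually required here---and it is supplied precisely by the generality of the definition---is that the two classical hole-placements together exhaust all values of $\nu$.
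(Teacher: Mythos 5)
The paper offers no proof of this statement---it simply cites Baker's work on extended Skolem sequences---and your argument supplies the standard justification: normalize to $x=1$, place the degenerate pair $(a_0,b_0)$ at $2\nu+1$ or at $2\nu$ according to the residue of $\nu$ modulo $4$, and invoke the classical existence results of Skolem and O'Keefe for ordinary and hooked Skolem sequences, whose admissible orders are exactly complementary. This is correct and is essentially the same route the paper takes, namely reduction to known classical constructions.
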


Note that given a Skolem sequence $(a_0,b_0), (a_1, b_1), \ldots, (a_\nu, b_\nu)$ covering the interval
$[x, x+2\nu]$ and an integer $t$, it is clear that $(a_0+t,b_0+t), (a_1+t, b_1+t), \ldots, (a_\nu+t, b_\nu+t)$ is still
a Skolem sequence which covers the interval $[x+t, x+2\nu+t]$. 
Therefore, the above theorem implies what follows. 

\begin{cor}\label{skolem cor}
Every interval of length $2\nu+1$ can be covered by a Skolem sequence.
\end{cor}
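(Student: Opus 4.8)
The plan is to prove Corollary~\ref{skolem cor} directly from Theorem~\ref{skolem} by a translation argument, exactly as the remark preceding the statement suggests. First I would fix an arbitrary interval of length $2\nu+1$, say $[x, x+2\nu]$ for some integer $x$ and some $\nu\geq 0$. The target is to exhibit a Skolem sequence whose union of pairs covers precisely this interval, thereby matching condition~(2) of the definition.

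Next I would invoke Theorem~\ref{skolem}, which guarantees a Skolem sequence $(a_0,b_0), (a_1,b_1), \ldots, (a_\nu,b_\nu)$ of order $\nu$; by the definition, this sequence covers some interval $[x', x'+2\nu]$, i.e.\ $\bigcup_{i=1}^{\nu}\{a_i,b_i\}=[x',x'+2\nu]$ while $b_i-a_i=i$ for all $i\in[0,\nu]$. The key observation, already recorded in the note between Theorem~\ref{skolem} and the corollary, is that translating every entry by a fixed integer $t$ preserves the Skolem property: the differences $b_i-a_i=i$ are unaffected by adding $t$ to both coordinates, and the covered interval shifts to $[x'+t, x'+2\nu+t]$. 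I would therefore set $t = x - x'$, so that the translated sequence $(a_i+t, b_i+t)_{i=0}^{\nu}$ covers exactly $[x, x+2\nu]$, which is the prescribed interval of length $2\nu+1$.

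This completes the argument, so strictly speaking there is no hard part: the corollary is an immediate formal consequence of the translation-invariance of the Skolem condition together with the existence result of Baker. If anything requires care, it is only the bookkeeping of conventions—confirming that the ``length'' of an interval $[x,x+2\nu]$ is being counted as the number of integers it contains, namely $2\nu+1$, so that the parameter $\nu$ is recovered correctly from a given interval. Since every interval of odd cardinality $2\nu+1$ arises this way for a unique $\nu\geq 0$, and an interval of even cardinality cannot be covered by a Skolem sequence (the covered interval always has odd length $2\nu+1$), the phrasing ``every interval of length $2\nu+1$'' is exactly the right scope, and no further obstacle remains.
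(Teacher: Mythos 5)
Your proposal is correct and follows exactly the paper's own reasoning: the paper justifies the corollary by the one-line remark that translating every entry of a Skolem sequence by a fixed integer $t$ preserves the differences $b_i-a_i=i$ and shifts the covered interval, so Theorem~\ref{skolem} plus the choice $t=x-x'$ gives the result. No differences worth noting.
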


\section{Determining HWP($C_M[n]; M, Mn$)}
\label{Section C_M[n]}
In this section, we provide sufficient conditions for a solution of 
HWP($C_M[n];$ $M, Mn$) to exists. 
We will make use of Theorem \ref{matrix} to factorize large subgraphs of $C_M[n]$ by constructing suitable matrices with entries in $\Z_n$, and use Theorems
\ref{4reg} and \ref{6reg} to factorize what is possibly left over. 
For this reason, given any integers $x$ and $y$ such that $0<\ell=y-x<n$, we 
define  two $(\ell+1) \times 2$ matrices below, denoted by $A(x,y)$ and $B(x,y)$,  with entries in $\Z_n$:
\[
\scriptsize
\begin{tabular}{c|c|c}
$A(x,y)$  & $B(x,y)$ if $\ell$ is odd & $B(x,y)$ if $\ell$ is even \\ 
 \hline
 & & \\
$
\left[
  \begin{array}{rr}
    x        & -x  \\
    x+1      & -(x+1)      \\  
    \vdots   & \vdots  \\  
    x+\ell   & -(x+\ell) 
  \end{array}
\right] 
$
&
$
\left[
  \begin{array}{rr}
    x        & -(x+1)  \\
    x+1      & -x      \\  
    \vdots   & \vdots  \\  
    x+2i     & -(x+2i+1)  \\
    x+2i+1   & -(x+2i)  \\    
    \vdots   &  \vdots \\
    x+\ell-1 & -(x+\ell)\\
    x+\ell   & -(x+\ell-1) 
  \end{array}
\right] 
$ 
&
$
\left[
  \begin{array}{rr}
    x        & -(x+1)     \\
    x+1      & -x         \\    
    \vdots   & \vdots     \\  
    x+2i     & -(x+2i+1)  \\
    x+2i+1   & -(x+2i)    \\    
    \vdots   &  \vdots    \\
    x+\ell-4 & -(x+\ell-3)\\
    x+\ell-3 & -(x+\ell-4)  \\     
    x+\ell-2 & -(x+\ell-1)\\
    x+\ell-1 & -(x+\ell)  \\ 
    x+\ell   & -(x+\ell-2)\\       
  \end{array}
\right] 
$ 
\end{tabular}
\]
Further, if $y<x$, we set $A(x,y)=\emptyset=B(x,y)$. Finally, $A(x,x)=[x\;\;\;-x]$.
Note that $B(x,y)$ is not defined when $y=x$.

We note that when $x\leq y$ each of the rows in $A(x,y)$ sums to $0$. Similarly, when $x<y$ each of the rows in $B(x,y)$ sums to $\pm1$, unless $y-x$ is even, in which case the last row of $B(x,y)$ sums to $2$. 

We first consider the problem in which $n$ is odd. 
\begin{lemma} \label{C_M[n] n odd}
Let $M,n\geq 3$ with $n$ odd, and let $0\leq\beta\leq n$.
Then $(\alpha,\beta) \in \mathrm{HWP}(C_M[n];$ $M,Mn)$ except possibly when $\beta=1$.
\end{lemma}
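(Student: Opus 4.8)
The plan is to realize the factorization through Theorem~\ref{matrix}. Since $C_M[n]$ is $2n$-regular, every $2$-factorization has exactly $n$ factors, so $\alpha+\beta=n$. I would apply Theorem~\ref{matrix} with $\Gamma=T=\Z_n$, $\ell=M$, $g=1$ and $g'=n$: the element $0$ has order $g=1$ and contributes $C_{g\ell}=C_M$-factors, while any generator of $\Z_n$ has order $g'=n$ and contributes $C_{g'\ell}=C_{Mn}$-factors. Thus the lemma reduces to constructing an $n\times M$ matrix with entries in $\Z_n$, each of whose columns is a permutation of $\Z_n$, such that exactly $\alpha$ rows sum to $0$ and the remaining $\beta$ rows each sum to a generator of $\Z_n$. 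Note that $T=\Z_n$ is closed under negatives because $n$ is odd, so the closure hypothesis of Theorem~\ref{matrix} holds; I intend to build the matrix with $\ell=M$ directly, but this also permits the alternative of building a small matrix and invoking the ``moreover'' clause.

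Before constructing I would record the obstruction. The sum of all entries is $M\cdot\tfrac{n(n-1)}{2}\equiv 0\pmod n$ since $n$ is odd, so the $\beta$ generator-valued row sums must themselves add to $0$. A single generator is nonzero, so $\beta=1$ cannot arise from any such matrix -- this is exactly the excepted value. For every other $\beta$ a suitable multiset summing to $0$ is easy to exhibit (copies of the pair $\{1,-1\}$, together with one triple $\{1,1,-2\}$ when $\beta$ is odd, using that $-2$ is a unit for odd $n$), and the work is to arrange these as genuine row sums while keeping each column a permutation.

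For the arrangement I would split on the parity of $M$ and lean on the gadgets $A(x,y)$ and $B(x,y)$. When $M$ is even, the $n\times 2$ matrix obtained by stacking $B(0,\beta-1)$, whose $\beta$ rows sum to $\pm1$ (or to $2$ in one row when $\beta-1$ is even, all coprime to the odd number $n$), on top of $A(\beta,n-1)$, whose $\alpha$ rows sum to $0$, has each column equal to a permutation of $\Z_n$ and the desired row-sum profile; boundary cases $\beta\in\{0,n\}$ use a single block. Appending $\tfrac{M-2}{2}$ copies of the two-column block $[\mathrm{id}\mid -\mathrm{id}]$, which adds $0$ to every row sum and preserves the column condition, yields the required $n\times M$ matrix. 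When $M$ is odd I would instead start from the $n\times 3$ matrix whose columns are $(i)_i$, $(-2i)_i$ and $(R(i))_i$, where $R$ is a permutation of $\Z_n$ with exactly $\alpha$ fixed points and with $R(i)-i$ coprime to $n$ off its fixed set; such an $R$ is assembled from transpositions $(2j,2j{+}1)$ of displacement $\pm1$, plus one $3$-cycle $(0,1,2)$ when $\beta$ is odd (the displacement $-2$ again being a unit). Here $(-2i)_i$ is a permutation because $\gcd(2,n)=1$, and the row sums are $R(i)-i$, giving $\alpha$ zeros and $\beta$ generators; I then pad with $\tfrac{M-3}{2}$ copies of $[\mathrm{id}\mid -\mathrm{id}]$ to reach $M$ columns.

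The main obstacle is the simultaneous control of the \emph{exact} count $\beta$ of generator-valued row sums and the requirement that all $M$ columns be permutations of $\Z_n$; the two base constructions above are designed precisely to decouple these, the $B$-block (resp.\ the displacement permutation $R$) producing the generators and the $A$-block (resp.\ the fixed points of $R$) absorbing the zeros, after which padding costs nothing. The delicate points to verify are that the relevant sums really have order $n$ -- which is where oddness of $n$ is used, guaranteeing $2,-2$ are units and $-T=T$ -- and that $\beta=1$ is genuinely the only value left uncovered, which follows from the global sum being $0$. Feeding any of these matrices into Theorem~\ref{matrix} then gives $(\alpha,\beta)\in\mathrm{HWP}(C_M[n];M,Mn)$ for all $\beta\neq 1$, completing Lemma~\ref{C_M[n] n odd}.
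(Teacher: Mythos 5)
Your proposal is correct and follows essentially the same route as the paper: both reduce the lemma to Theorem~\ref{matrix} and build the required matrix with permutation columns from the $A$ and $B$ blocks, exploiting that $\pm1,\pm2$ are units modulo the odd integer $n$ (your observation that the column-permutation condition forces the nonzero row sums to total $0$, hence rules out $\beta=1$ for this method, is a nice addition the paper omits). The paper's version is slightly leaner --- it stacks $A(1,\alpha)$ over $B(\alpha+1,n)$ and, for odd $M$, replaces each first-column entry $m_1$ by the pair $[\frac{m_1}{2},\ \frac{m_1}{2}]$ (well defined since $n$ is odd), then invokes the ``moreover'' clause of Theorem~\ref{matrix} instead of padding to $M$ columns --- but these differences are cosmetic.
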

\begin{proof}
 Let $T$ be the $n\times 2$ matrix defined as 
$T=
\left[
  \begin{array}{c}
    A(1,\alpha)  \\
    B(\alpha+1, n)   
  \end{array}
\right] 
$.  
Also, let $T'$ be the $n\times3$ matrix obtained from $T$ by replacing each row $[m_1, m_2]$ with
$[\frac{m_1}{2}, \frac{m_1}{2}, m_2]$. 
Here $\frac{m_i}{2}$ is well defined as an element of $\Z_n$, since $n$ is odd.

Clearly, each of the first $\alpha$ rows of $T$ sums to $0$, whereas each of the remaining $\beta$ rows sums to $\pm1$ or $\pm2$ (which are elements of order $n$ in $\Z_n$ since $n$ is odd).
Further, each column of $T$ and $T'$ is a permutation of $Z_n$. 
Therefore, by applying Theorem \ref{matrix} to $T$ and $T'$, it follows that 
$(\alpha,\beta)\in \mathrm{HWP}(C_M[n]; M, Mn)$ for any $M\geq 3$.
\end{proof}

Note that the above Lemma has been independently proven in \cite{KeranenPastine} with different techniques. 
An alternative proof in the case where $M$ is odd can be found in \cite{BDT2}.

The following three lemmas deal with the case where $n$ is even.

\begin{lemma}\label{C_M[n] n even beta=0} If $n\geq 2$ is even and $M\geq3$, then
   $(n,0) \in \mathrm{HWP}(C_M[n];$ $M,Mn)$ except when $M$ is odd and 
   $n=2$ and possibly when $M$ is odd and $n=6$.
\end{lemma}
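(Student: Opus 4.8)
The plan is to obtain the desired $C_M$-factorization of $C_M[n]=C_M[\Z_n]$, i.e.\ the pair $(n,0)$, by applying Theorem~\ref{matrix} with the full connection set $T=\Z_n$ (so $|T|=n$), with $g=1$, and with all $\alpha=n$ rows of the matrix summing to $0$ (the unique element of order $1$); by the remark after Theorem~\ref{matrix} this is exactly a $C_M$-factorization of $C_M[n]$. The whole argument is governed by a sum count: if each column of an $\ell$-column matrix is a permutation of $\Z_n$, then every column sums to $\sum_{k\in\Z_n}k\equiv\tfrac n2\tmod n$ (as $n$ is even), so the entries total $\ell\cdot\tfrac n2\tmod n$, whereas all-zero row sums force the total to be $0$. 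These agree iff $\ell$ (hence $M$, with $\ell\equiv M$) is even, and this parity obstruction is what splits the proof.

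For $M$ even I would take the two-column matrix $A(0,n-1)$: its first column is $0,1,\dots,n-1$ and its second column consists of their negatives, so each column is a permutation of $T=\Z_n$, every row sums to $0$, and $T$ is closed under negation. Applying Theorem~\ref{matrix} with $\ell=2$, $g=1$, $\alpha=|T|=n$, the ``moreover'' clause (condition~\ref{constructionD cond 3}) upgrades this to a $C_m$-factorization of $C_m[n]$ for every even $m\ge 2$, in particular for every even $M\ge 4$. This disposes of the even case with no exceptions.

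For $M$ odd the count above shows no translation matrix on $T=\Z_n$ can succeed; more precisely, a subset $S\subseteq\Z_n$ closed under negation has column sum $0$ unless $\tfrac n2\in S$, in which case it is $\tfrac n2\ne 0$. Hence the self-inverse difference $\tfrac n2$ is the sole obstruction, and it can never lie in a translation-only $C_M$-factor (a factor whose row contains $\tfrac n2$ and otherwise sums to $0$ yields $C_{2M}$-cycles, not $C_M$-cycles). The case $M=3$ is immediate, since $C_3[n]\cong K_3[n]$ and Theorem~\ref{Liu} (equivalently Corollary~\ref{LiuGen}) gives a $C_3$-factorization of $K_3[n]$ precisely when $(\ell,t,z)=(3,3,n)\notin\{(3,3,2),(3,3,6)\}$, i.e.\ for all even $n\notin\{2,6\}$, matching the stated exceptions.

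It remains to treat odd $M\ge 5$ with $n$ even, which is the heart of the matter. The factor $C_M[\{0\}]$ is already a $C_M$-factor and can be set aside; the remaining differences $\Z_n\setminus\{0\}$ must be organised into zero-sum configurations spanning $M$ columns. Because $M$ is odd, the naive pairing $\{d,-d\}$ into two complementary rows only closes up when $d$ has odd order in $\Z_n$, so differences of even order, and above all $\tfrac n2$, have to be grouped across several rows before both the column constraint and the zero row-sums can be met. I would use a Skolem sequence (Corollary~\ref{skolem cor}) to carry out this grouping of the $\pm d$ into zero-sum blocks, and a single non-translation (reflection-type) permutation to absorb $\tfrac n2$, since the sum count forbids realising $\tfrac n2$ inside any translation factor. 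Producing this gadget explicitly and checking that it meshes with the Skolem blocks so that every edge of $C_M[n]$ is covered exactly once is the step I expect to be the main obstacle. It should go through for all even $n\ge 4$ except possibly $n=6$, where the room available for the $\{\pm1,\tfrac n2\}$-type gadget is too small, whereas $n=2$ fails outright because $\tfrac n2=1$ is then the only nonzero difference and has no partner to combine with — exactly the exceptions asserted.
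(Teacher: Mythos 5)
Your even-$M$ case is sound and essentially the paper's own argument (the paper uses the $n\times M$ block matrix $[A(1,n)\;A(1,n)\;\cdots\;A(1,n)]$ rather than invoking the ``moreover'' clause on $A(0,n-1)$, but these are interchangeable), and your reduction of $M=3$ to Theorem~\ref{Liu} is correct. The problem is the case of odd $M\geq 5$, which you explicitly leave as ``the step I expect to be the main obstacle'': that step is the entire content of the lemma in this regime, and your proposed route cannot close it. As you yourself compute, when $n$ is even and the number of columns is odd, no matrix with all columns permutations of $\Z_n$ can have every row summing to $0$; this is not a difficulty to be worked around inside Theorem~\ref{matrix}, it is a proof that Theorem~\ref{matrix} (whose output is always a union of translation-type factors) can never produce the pair $(n,0)$ here. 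Your proposed fix --- Skolem-sequence zero-sum blocks plus ``a single non-translation (reflection-type) permutation to absorb $\tfrac n2$'' --- therefore steps outside the only machinery you have set up, and no such gadget is constructed or even specified; nothing in the paper's toolkit (Skolem sequences included, which are used elsewhere only to pass from $2$-column to $3$-column matrices) delivers it.

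The paper resolves this by abandoning the matrix method entirely for odd $M$: it takes a $C_3$-factorization $\{F_1,\dots,F_n\}$ of $C_3[n]\cong K_3[n]$ from Theorem~\ref{Liu} (valid for even $n\geq 4$, $n\neq 6$), normalizes each triangle $C_{ij}=(c_{ij}^0,c_{ij}^1,c_{ij}^2)$ so that $c_{ij}^2=(2,j)$, and then extends it to an $M$-cycle by appending the vertices $(h,j+i)$ for odd $h$ and $(h,j)$ for even $h$, $3\leq h<M$. The shift by $i$ on the odd levels guarantees that, as $i$ ranges over $[1,n]$, the edges between consecutive added levels are all distinct, so the $n$ extended factors form a $C_M$-factorization of $C_M[n]$. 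This lifting of the $M=3$ solution is the missing idea: you correctly isolated $M=3$ as the case Liu's theorem handles, but did not see that the general odd case reduces to it. As written, your proposal proves the lemma only for $M$ even and for $M=3$.
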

\begin{proof} We first consider the case where $M\geq3$ is odd. It is not difficult to check that there is no $C_M$-factorization of $C_M[2]$. Therefore, let $n\geq4$ be even with $n\neq 6$. 
By Theorem \ref{Liu} there exists a $C_3$-factorization
$\mathcal{F}=\{F_1, F_2, \ldots, F_n\}$ of $C_3[n]$, where $F_i=\{C_{ij}\mid j\in[1,n]\}$
and $C_{ij}=(c_{ij}^0, c_{ij}^1, c_{ij}^2)$. Without loss of generality we can assume
$c_{ij}^2=(2,j)$ for any $j\in[1,n]$.

Now, for each $i,j\in[1,n]$ we define the $M$-cycle 
$\overline{C}_{ij}=(\overline{c}_{ij}^0, \overline{c}_{ij}^1, \ldots, \overline{c}_{ij}^{M-1})$ 
as follows:
\[ \overline{c}_{ij}^h=
   \begin{cases}
     c_{ij}^h  & \text{if $h=0,1,2$,} \\
     (h,j+i)   & \text{if $h$ is odd  and $3\leq h<M$}, \\
     (h,j)     & \text{if $h$ is even and $4\leq h<M$}. \\     
   \end{cases}
\]
Finally, set $\overline{F}_i=\{\overline{C}_{ij}\mid j\in[1,n]\}$ and 
$\overline{\mathcal{F}}=\{\overline{F}_i\mid i\in[1,n]\}$. It is not difficult to check that each $F_i$ is a $C_M$-factor of $C_M[n]$ and $\overline{\mathcal{F}}$ is a $C_M$-factorization of $C_M[n]$.

If $M\geq 4$ is even, it is enough to apply Theorem \ref{matrix} to the $n\times M$ block matrix 
$T=[A(1,n)\;\;A(1,n)\;\cdots\;A(1,n)]$.
\end{proof}

Note that a result similar to Lemma \ref{C_M[n] n even beta=0} has been proven in \cite{KeranenPastine}
in the case where $M\geq 3$ is odd and $n>1$.

\begin{lemma} \label{C_M[n] n even beta=Mn/2}
Let $n\geq 2$ be even, $M\geq3$, and $0< \beta\leq n$.
Then $(n-\beta,\beta) \in \mathrm{HWP}(C_M[n];$ $M,Mn)$ whenever the following conditions simultaneously hold:
\begin{enumerate}
\item $\beta\equiv \frac{Mn}{2} \tmod{2}$;
\item if $Mn\equiv 2 \tmod{4}$ and $n>2$, then $\beta\neq1$.
\end{enumerate}
\end{lemma}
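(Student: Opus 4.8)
The plan is to build the required factorization by applying Theorem~\ref{matrix} to a suitable $n\times \ell$ matrix with entries in $\Z_n$, exactly as in the proof of Lemma~\ref{C_M[n] n odd}, but now with the complication that $n$ is even. The goal is to assign the $\beta$ ``long'' rows a row-sum that has order $n$ in $\Z_n$ (giving $C_{Mn}$-factors) and the remaining $n-\beta$ rows a row-sum of order $1$, i.e. $0$ (giving $C_M$-factors), while keeping each column a permutation of $\Z_n$. I would start from the template $T=\left[\begin{array}{c} A(1,\alpha)\\ B(\alpha+1,n)\end{array}\right]$ used before: the $A$-block rows sum to $0$ and the $B$-block rows sum to $\pm1$ or $\pm2$. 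The difficulty is that over even $n$ the values $\pm1$ still have order $n$, but $\pm2$ need not, so I must control how many rows of the $B$-block sum to $2$ and ensure the surviving row-sums all have order $n$. The parity hypothesis $\beta\equiv \tfrac{Mn}{2}\pmod 2$ is exactly the arithmetic constraint that makes the edge-count (equivalently the number of factors of each type) consistent, so I expect it to fall out of a counting/parity check on the matrix dimensions rather than needing a separate argument.

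First I would handle the generic range of $\beta$ by the matrix method and then patch the small or obstructed cases using the Cayley-graph lemmas. Concretely, for the bulk of values of $\beta$ I would set $\alpha=n-\beta$ and take the block matrix with an $A(1,\alpha)$-part contributing the $\alpha$ zero-sum rows and a $B(\alpha+1,n)$-part contributing $\beta$ rows of sum $\pm1$ (order $n$). Since the $B(x,y)$ block produces a row of sum $2$ only when $y-x$ is even, I would choose the split point so that the single ``sum $2$'' row either does not occur or is absorbed into the zero-sum count; when $n$ is even the element $2$ can have order $n/\gcd(2,n)=n/2<n$, which is why that last row cannot be allowed to persist as a long row. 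The condition $Mn\equiv 2\pmod 4$ with $n>2$ forcing $\beta\neq 1$ is the signature of the one residual obstruction where a single long factor cannot be realized this way, mirroring the $\beta=1$ exclusion in Lemma~\ref{C_M[n] n odd}.

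The step I expect to be the main obstacle is the boundary case where $\beta$ is near $n$ (so almost all factors are long $C_{Mn}$-factors) and the leftover graph after extracting the matrix-built factors is a small-degree Cayley subgraph of $C_M[n]$. There I would peel off the subgraph $C_M[\{\tfrac n2-1,\tfrac n2,\tfrac n2+1\}]$ and invoke Lemma~\ref{6 regular} to split it into three $C_{Mn}$-factors, or peel off a $4$-regular connected piece and use Theorem~\ref{4reg}; the hypothesis $Mn\equiv 0\pmod 4$ needed by Lemma~\ref{6 regular} is precisely what the first parity condition guarantees in the relevant subcase. The delicate bookkeeping is to verify that after removing the matrix-generated factors the residual connection set still generates $\Z_M\times\Z_n$ (connectivity) and has the right regularity, so that the Cayley-graph theorems apply; I would organize this by treating $\beta\in\{n,n-1,n-2,\dots\}$ explicitly for the top few values and the complementary range by the clean $A/B$ construction, checking in each regime that conditions~(1) and~(2) of the statement are exactly what is needed to avoid an order-$n/2$ row-sum or an unrealizable lone long factor.
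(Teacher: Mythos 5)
Your overall strategy---apply Theorem~\ref{matrix} to an $n\times\ell$ matrix whose $\beta$ ``long'' rows have sums of order $n$ and whose remaining rows sum to $0$---is the right one, and your observation that condition~1 is forced by a parity count on the column sums is sound. But there are two genuine gaps. First, Theorem~\ref{matrix} requires the number of columns $\ell$ to satisfy $M\equiv\ell\pmod 2$, so for odd $M$ you need a \emph{three}-column matrix. In Lemma~\ref{C_M[n] n odd} the third column is manufactured by replacing $m_1$ with $[\frac{m_1}{2},\frac{m_1}{2}]$, which is only possible because $2$ is invertible in $\Z_n$ for odd $n$; for even $n$ this fails, and your proposal offers no substitute. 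The paper's device is a Skolem sequence covering an interval of length $n-1$: each first-column entry $i$ is replaced by a pair $(x_i,y_i)$ with $x_i+y_i=i$ (so row sums are preserved) such that the $x_i$'s and the $y_i$'s each sweep out a full transversal of $\Z_n$. Without some such mechanism the odd-$M$ half of the lemma is unproved. Second, your plan for keeping every long row-sum of order $n$ is too vague: ``absorbing'' the sum-$2$ row of $B(x,y)$ into the zero-sum count is not possible (a row summing to $2$ is neither a $C_M$-row nor, when $4\mid n$, a $C_{Mn}$-row). The paper instead inserts small explicit corrector blocks $C_i$ that use up the elements of $[-2,2]\cup\{\frac n2\}$ (or $[-1,1]\cup\{\frac n2\}$) in each column and produce exactly $i$ rows with sums $\pm1$, $\frac n2\pm1$ or $\frac n2\pm2$; the point is that $\frac n2\pm1$ has order $n$ iff $n\equiv0\pmod 4$ and $\frac n2\pm2$ has order $n$ iff $n\equiv2\pmod 4$, which is precisely how the case split on $n\bmod 4$ is exploited. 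Some concrete device of this kind is needed to realize every admissible $\beta$, not just those for which the na\"ive $A/B$ split happens to avoid a sum-$2$ row.

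Your proposed fallback for $\beta$ near $n$---peeling off $C_M[\{\frac n2-1,\frac n2,\frac n2+1\}]$ and applying Lemma~\ref{6 regular}---is both unnecessary and parity-inconsistent for \emph{this} lemma. Removing three $C_{Mn}$-factors leaves the connection set $\Z_n\setminus\{\frac n2-1,\frac n2,\frac n2+1\}$, which sums to $0$ in $\Z_n$; any matrix over it therefore has an even number of rows with odd (order-$n$) sums, forcing $\beta-3$ even, i.e.\ $\beta$ odd. That contradicts condition~1 whenever $\frac{Mn}{2}$ is even (e.g.\ for all even $M$). This three-factor peel is exactly the mechanism of the companion Lemma~\ref{C_M[n] n even beta=Mn/2+1}, which handles the complementary residue $\beta\equiv\frac{Mn}{2}+1\pmod 2$; in the present lemma the matrix method alone covers the whole range $0<\beta\le n$, including $\beta=n$.
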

\begin{proof} We consider four cases depending on whether $n\equiv 0,2\tmod{4}$ and $M\equiv 0,1\tmod{4}$.
In each of these cases, we will construct an $(n\times c)$ matrix $T$, where $\{2,3\}\ni c\equiv M\tmod{2}$, satisfying the following conditions:
\begin{enumerate}
  \item each column of $T$ is a permutation of $\Z_n$;
  \item $T$ has $\alpha=n-\beta$ rows each of which sums to $0$;
  \item $T$ has $\beta$ rows each of which sums to $\pm1$, or 
  $
    \begin{cases}
      \frac{n}{2}\pm1 & \text{if $n\equiv 0\tmod{4}$,}\\
      \frac{n}{2}\pm2 & \text{if $n\equiv 2\tmod{4}$.}
    \end{cases}
  $
\end{enumerate}
Note that $\frac{n}{2}\pm1$ is coprime to $n$ if and only if 
$n\equiv 0\tmod{4}$; therefore, $\frac{n}{2}\pm1$ has order $n$ in $\Z_n$. Similarly, 
$\frac{n}{2}\pm2$  has order $n$ in $\Z_n$ if and only if
$n\equiv 2\tmod{4}$. The assertion then follows by applying 
Theorem \ref{matrix} to $T$.

  We first consider the case where $n\equiv 2\pmod{4}$ and $M$ is even; thus, by assumption, we have that
  $\beta$ is even. If $n=2$, then $\beta=2$ (since, by assumption, $\beta>0$) and we set 
  $T=
  \left[
  \begin{array}{rr}
    0   & 1  \\
    1   & 0     
  \end{array}
  \right] 
  $.
 We now assume that $n\geq 6$. For $i\in\{2,4,6\}$ we first define the $6\times 2$ matrix $C_{i}$ as follows:
\[
C_2=
\left[
  \begin{array}{rr}
    -1        & 1     \\
    0         &  0    \\    
    1         & -1    \\  
    \frac{n}{2} & 2   \\
    2         & -2    \\    
    -2        &  \frac{n}{2}    \\       
  \end{array}
\right]\;\;\;
C_4=
\left[
  \begin{array}{rr}
    0        & 1     \\
    1        & -1    \\    
    -1       & 0     \\  
    \frac{n}{2} & 2  \\
    2        & -2    \\    
    -2       &  \frac{n}{2}    \\       
  \end{array}
\right] \;\;\;
C_6=
\left[
  \begin{array}{rr}
    0        & 1     \\
    2        & -1    \\    
    -1       & 0     \\  
    \frac{n}{2} & 2  \\
    1        & -2    \\    
    -2       &  \frac{n}{2}    \\       
  \end{array}
\right].
\]
Clearly, each column of $C_i$ uses up all integers in $[-2,2]\cup\{\frac{n}{2}\}$. Also, $i$ rows of
$C_i$ sum to $\pm1$ or $\frac{n}{2}\pm2$, which are all elements of order $n$ in $\Z_n$. Each of the remaining $6-i$ rows sums to $0$. Now, for each value of $\beta$, we define an $n\times 2$ matrix $T$ satisfying conditions $1-3$ as follows:
\[
\begin{tabular}{c|c}
$\beta=2$ & $4\leq\beta\equiv i\tmod{4}$ with $i\in\{4,6\}$\\ \hline
&\\
$
T=
\left[
  \begin{array}{c}
    \;\;\; A(3,\frac{n}{2}-1)             \\
   -A(3,\frac{n}{2}-1)             \\
    C_\beta                     
  \end{array}
\right] 
$
&
$
T=
\left[
  \begin{array}{c}
    \;\;\;A(\frac{\beta-i}{2}+3, \frac{n}{2}-1)             \\
   -A(\frac{\beta-i}{2}+3, \frac{n}{2}-1)             \\
    \;\;\;B(3,\frac{\beta-i}{2}+2)             \\    
   -B(3,\frac{\beta-i}{2}+2)             \\
    C_i                   
  \end{array}
\right] 
$ 
\end{tabular}
\]

  We now let $n\equiv 2\pmod{4}$ and $M$ be odd. Note that, by assumption, we have that
  $\beta>0$ is odd. If $n=2$, we set 
  $T=\left[
  \begin{array}{rrr}
    0  &  0 & 0     \\
    1  &  1 & 1
  \end{array}
  \right]$. We now assume that $n\geq 6$ and we note that by condition 2 we have that  
   $\beta\geq3$,  For $i\in\{3,5\}$ we define the $6\times 2$ matrix $C_{i}$ as follows:
\[
C_3=
\left[
  \begin{array}{rr}
    -1        & 2   \\  
    -2        &  \frac{n}{2}    \\        
    0         &  0    \\    
    1         & -1    \\  
    2         & -2    \\    
 \frac{n}{2}  & 1       
  \end{array}
\right]\;\;\;\;\;
C_5=
\left[
  \begin{array}{rr}
    -1        & 2  \\  
    0         &-1     \\    
    1         & 0     \\  
    -2        & \frac{n}{2}     \\    
    2         & -2     \\       
  \frac{n}{2} & 1        
  \end{array}
\right].
\]
Clearly, both columns of $C_i$ use up all integers in $[-2,2]\cup\{\frac{n}{2}\}$. 
Also, each of the first $i-1$ rows of $C_i$ sums to $\pm1$ or $\frac{n}{2}-2$, 
the last row of $C_i$ sums to $\frac{n}{2}+1$, and the remaining $6-i$ rows sum to $0$.
We now define an $n\times 2$ matrix $R$ according to the possible values of $\beta$:
\[
R=
\left[
  \begin{array}{c}
    \;\;\;A(\frac{\beta-i}{2}+3, \frac{n}{2}-1)             \\
   -A(\frac{\beta-i}{2}+3, \frac{n}{2}-1)             \\
    \;\;\;B(3,\frac{\beta-i}{2}+2)             \\    
   -B(3,\frac{\beta-i}{2}+2)             \\
    C_i                   
  \end{array}
\right]\;\;\;
\text{where $3\leq\beta\equiv i\tmod{4}$ with $i\in\{3,5\}$}.
\]
Clearly, each column of $R$ is a permutation of $\Z_n$. Further, $R$ has $\alpha$ rows whose sum is $0$, and $\beta-1$ rows each of which sums to $\pm1$ or $\frac{n}{2}\pm2$, whereas the last row sums to 
$\frac{n}{2}+1$.
To construct the requisite $(n\times 3)$ matrix $T$ satisfying conditions $1-3$, 
we consider a Skolem sequence $\{(a_i,b_i)\mid i\in[0,n/2-1]\}$ covering $[1,n-1]$ (which exists by 
Corollary \ref{skolem cor}) and replace each element $i\in \left[-\frac{n}{2}+1,\frac{n}{2}\right]$ in the first column of $R$ with the pair $(x_i,y_i)$ defined below:
\begin{equation}\label{sub}
  (x_i,y_i)=
  \begin{cases}
    (b_i, -a_i) & \text{if $i\in[0,\frac{n}{2}-1]$},  \\
    (a_{-i}, -b_{-i}) & \text{if $i\in-[1,\frac{n}{2}-1]$}, \\    
    (0,0) & \text{if $i=\frac{n}{2}$}.    
  \end{cases}
\end{equation}
It is not difficult to check that the new matrix $T$ satisfies conditions $1-3$. In fact, the first column (resp., second column) of $T$ uses up all integers in $[1,n]$ (resp., $-[1,n]$), therefore they are both permutations of 
$\Z_n$. We also point out that the above substitution preserves the sum of each row, except for the last row of $T$, which is $[0\;  0\;  1]$, and thus sums to $1$, and therefore yields a $C_{Mn}$-factor. 

Now, let $n\equiv 0\pmod{4}$; thus, by assumption, 
  $\beta>0$ is even. For $i\in\{0,2,4\}$ we define the $4\times 2$ matrix $C_{i}$ as follows:
\[
C_0=
\left[
  \begin{array}{rr}
    0           &  0    \\
    1           &  -1    \\    
    -1          &  1    \\  
   \frac{n}{2}  & \frac{n}{2} \\     
  \end{array}
\right]\;\;\;\;\;
C_2=
\left[
  \begin{array}{rr}
    0           &  0    \\
    1           & -1    \\      
    \frac{n}{2} &  1    \\    
   -1           & \frac{n}{2} \\     
  \end{array}
\right]\;\;\;\;\;
C_4=
\left[
  \begin{array}{rr}
    0           &  1    \\
    1           &  0    \\    
    \frac{n}{2} & -1    \\  
   -1           & \frac{n}{2} \\     
  \end{array}
\right].
\]
Clearly, both columns of $C_i$ use up all integers in $[-1,1]\cup\{\frac{n}{2}\}$. Also, $i$ rows of
$C_i$ sum to $1$ or $\frac{n}{2}\pm1$, whereas the remaining $4-i$ row sums to $0$.
If $M$ is even, we define an $n\times 2$ matrix $T$ satisfying conditions $1-3$ as follows:
\[
\begin{tabular}{c}
$2\leq\beta\equiv i\tmod{4}$ with $i\in\{2,4\}$\\ \hline
\\
$
T=
\left[
  \begin{array}{c}
    \;\;\;A(\frac{\beta-i}{2}+2, \frac{n}{2}-1)             \\
   -A(\frac{\beta-i}{2}+2, \frac{n}{2}-1)             \\
    \;\;\;B(2,\frac{\beta-i}{2}+1)             \\    
   -B(2,\frac{\beta-i}{2}+1)             \\
    C_i                   
  \end{array}
\right] 
$ 
\end{tabular}
\]
If $M$ is odd, to construct the required $(n\times 3)$ matrix  satisfying conditions $1-3$, 
we consider a Skolem sequence $\{(a_i,b_i)\mid i\in[0,n/2-1]\}$ of $[1,n-1]$
(which exists by  Corollary \ref{skolem cor})
 and replace each element $i$ in the first column of $T$ with the pair $(x_i,y_i)$ defined in equation $\eqref{sub}$.
It is not difficult to check that the new matrix
 satisfies conditions $1-3$ and this completes the proof.
\end{proof}

\begin{lemma}\label{C_M[n] n even beta=Mn/2+1}
Let $n\geq 2$ be even, $M\geq3$, and $0< \beta\leq n$.
Then $(n-\beta,\beta) \in \mathrm{HWP}(C_M[n];$ $M,Mn)$ whenever the following conditions simultaneously 
hold:
\begin{enumerate}
\item $\beta\equiv \frac{Mn}{2}+1 \tmod{2}$;
\item if $Mn\equiv 0 \tmod{4}$, then $\beta\neq1$;
\item if $Mn\equiv 2 \tmod{4}$ and $n>2$, then $\beta\neq2$.
\end{enumerate}
\end{lemma}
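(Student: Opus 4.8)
The plan is to view this lemma as the parity-complement of Lemma~\ref{C_M[n] n even beta=Mn/2} and to understand first why the matrix method alone cannot prove it. Whenever Theorem~\ref{matrix} is applied through a matrix with $c$ columns and entries in $\Z_n$, each column is a permutation of $\Z_n$ and hence sums to $\frac{n}{2}$, so the row-sums add up to $c\cdot\frac{n}{2}$ in $\Z_n$. Since $n$ is even, every element of order $n$ in $\Z_n$ is represented by an odd integer, so a row that yields a $C_{Mn}$-factor has odd sum while a row that yields a $C_M$-factor has sum $0$. Reducing modulo $2$ and taking $c\equiv M\pmod 2$ forces $\beta\equiv \frac{Mn}{2}\pmod 2$, which is exactly the parity handled by Lemma~\ref{C_M[n] n even beta=Mn/2}. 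Thus, to reach the opposite parity demanded here, I must introduce a device that alters this count by an odd amount.

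For the bulk of the cases --- those with $Mn\equiv 0\pmod 4$, which is precisely when $\beta$ is forced to be odd, and $n\geq 4$ --- the device is Lemma~\ref{6 regular}. First I would peel off the connected $6$-regular subgraph $C_M[\{\frac{n}{2}-1,\frac{n}{2},\frac{n}{2}+1\}]$ and factor it into three $C_{Mn}$-factors; this is legitimate because $Mn\equiv 0\pmod 4$ and $n\geq 4$. The three removed differences form a set closed under negation, so the leftover difference set is the symmetric interval $T'=[-(\frac{n}{2}-2),\frac{n}{2}-2]$ of size $n-3$, whose columns now sum to $0$ rather than $\frac{n}{2}$; this is exactly the parity flip I need. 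I would then build an $(n-3)\times c$ matrix over $T'$ with $c\in\{2,3\}$ and $c\equiv M\pmod 2$, using the blocks $A(x,y)$ and $B(x,y)$ as in Lemma~\ref{C_M[n] n even beta=Mn/2}, arranging $\alpha=n-\beta$ rows of sum $0$ and $\beta-3$ rows of sum $\pm1$ (balanced, which is possible since $\beta-3$ is even), and, when $M$ is odd, pass from two to three columns via the Skolem substitution \eqref{sub}. Applying Theorem~\ref{matrix} realizes $(\alpha,\beta-3)\in\mathrm{HWP}(C_M[T'];M,Mn)$, and combining with the three peeled factors gives $(\alpha,\beta)$.

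The remaining regime, $Mn\equiv 2\pmod 4$ (equivalently $M$ odd and $n\equiv 2\pmod 4$, so $\beta$ is even), is where Lemma~\ref{6 regular} is unavailable and is therefore the main obstacle. When $n=2$ the only admissible value is $\beta=2$, which is the Hamiltonian factorization of $C_M[2]$ supplied by Lemma~\ref{hagg}. For $n\geq 6$ I would instead work directly with a three-column matrix over $\Z_n$, as in the odd-$M$ case of Lemma~\ref{C_M[n] n even beta=Mn/2}, but choose the small cap block and the Skolem substitution so that an even number of order-$n$ rows results. The point is that a two-column matrix over $\Z_n$ has even total class, so its genuine order-$n$ rows are even in number; a Skolem-type substitution that repairs an even number $b$ of ``bad'' rows (those summing to $\frac{n}{2}\pm$ even, which are not of order $n$) into order-$n$ rows then shifts the final count to the even value $\beta\equiv b\pmod 2$. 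The delicate step is to engineer a substitution repairing two such rows at once, in contrast to the single-row repair of \eqref{sub}.

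The hard part throughout is the bookkeeping at the parity level together with the extreme small values. I expect the main difficulty to be the $Mn\equiv 2\pmod4$ regime: producing an even number of order-$n$ rows requires the two-row Skolem repair, and the value $\beta=2$ (excluded by hypothesis when $n>2$) is precisely the smallest even count this construction cannot reach, just as $\beta=1$ is the value the $Mn\equiv0$ construction cannot reach once three factors have already been peeled off. Verifying in every case that each column of the constructed matrix is a genuine permutation and that exactly $\alpha$ rows vanish while the rest have order $n$ --- with no unrepaired row of sum $2$ or $\frac{n}{2}$ --- is the routine but lengthy core of the argument.
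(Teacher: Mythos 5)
Your treatment of the case $Mn\equiv 0\pmod 4$ (peel off $C_M[\{\frac{n}{2}-1,\frac{n}{2},\frac{n}{2}+1\}]$ via Lemma~\ref{6 regular}, then apply Theorem~\ref{matrix} to an $(n-3)$-row matrix over the remaining symmetric difference set, with the Skolem substitution supplying the third column when $M$ is odd) is exactly what the paper does, and the $n=2$ subcase via Lemma~\ref{hagg} also matches. The genuine gap is in the regime $Mn\equiv 2\pmod 4$, $n\geq 6$, where you propose to stay on the full difference set $\Z_n$ and obtain an even number of order-$n$ rows by a ``two-row Skolem repair.'' This is ruled out by the very parity computation you open with: in any $n\times 3$ matrix whose columns are permutations of $\Z_n$ and \emph{all} of whose rows sum to $0$ or to a unit of $\Z_n$ (Theorem~\ref{matrix} requires every row to be one or the other), the grand total is $3\cdot\frac{n}{2}$, which is odd when $n\equiv 2\pmod 4$; since units of $\Z_n$ are odd and $0$ is even for even $n$, the number of order-$n$ rows is forced to be odd. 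No substitution, single-row or two-row, can change this, because the obstruction lives in the final matrix, not the intermediate one; your count ``$\beta\equiv b\pmod 2$ with $b$ even'' implicitly leaves the unrepaired rows with even, nonzero, non-unit sums, to which Theorem~\ref{matrix} does not apply.

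The missing idea is the degree-$4$ analogue of your degree-$6$ device: the paper peels off the connected $4$-regular Cayley graph $C_M[\{\frac{n}{2}-1,\frac{n}{2}\}]$ and applies Theorem~\ref{4reg} to obtain two $C_{Mn}$-factors, then runs the matrix method on the remaining $n-2$ differences. Removing these two differences changes each column sum to $\frac{n}{2}-(\frac{n}{2}-1)-\frac{n}{2}=1-\frac{n}{2}$, which is even for $n\equiv 2\pmod 4$, so three such columns give an even total and an even number $\beta-2$ of order-$n$ rows --- exactly the parity you need, with $\beta\geq 4$ guaranteed by hypothesis (one row is allowed to sum to $\frac{n}{2}+4$, a unit since any common divisor with $n$ is an odd divisor of $8$). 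With this replacement your outline becomes the paper's proof; as written, the $Mn\equiv 2\pmod 4$, $n\geq 6$ case cannot be completed by your method.
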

\begin{proof} We first consider the case where $Mn\equiv 0\tmod{4}$; hence, by assumption, we have that
$\beta$ is odd and $\beta\geq3$, thus $n\geq 4$. 
Let $T$ be the $(n-3)\times 2$ matrix with entries in 
$\Z_n\setminus\{\frac{n}{2}\pm1, \frac{n}{2}\}$ defined as follows:
\[
T=
\left[
  \begin{array}{c}
    A(-\frac{n}{2}+2, -\frac{n}{2}+1+\alpha)             \\
    B(-\frac{n}{2}+2+\alpha, \frac{n}{2}-2)             \\                   
  \end{array}
\right]. 
\]
Clearly, each column of $T$ is a permutation of 
$\Z_n\setminus\{\frac{n}{2}\pm1, \frac{n}{2}\}$,
each of the first $\alpha$ rows of $T$ sums to $0$, whereas each of the remaining $\beta-3$ 
sums to $\pm1$. 

We now construct an $(n-3)\times 3$ matrix $T'$ by modifying $T$ as follows.
By Corollary \ref{skolem cor}, there is a Skolem sequence $\{(a_i,b_i)\mid i\in[0,\frac{n}{2}-1]\}$ covering
$[-\frac{n}{2}+2,\frac{n}{2}-2]$. To construct $T$ we replace each element $i$ in the first column of $T'$ with the pair 
$(x_i,y_i)$ defined below:
\begin{equation}
  (x_i,y_i)=
  \begin{cases}
    (b_i, -a_i) & \text{if $i\in[0,\frac{n}{2}-2]$}, \\
    (a_{-i}, -b_{-i}) & \text{if $i\in-[1,\frac{n}{2}-2]$}. \\    
  \end{cases}
\end{equation}
It is not difficult to check that each of the first two columns of $T'$ uses up all integers in 
$[-\frac{n}{2}+2,\frac{n}{2}-2]$, therefore they are both permutations of 
$\Z_n\setminus\{\frac{n}{2}\pm1, \frac{n}{2}\}$. We also point out to the reader that the above substitution preserves the sum of each row. Therefore,  by applying 
Theorem \ref{matrix} to $T$ and $T'$, it follows that 
$(n-\beta, \beta-3)\in \mathrm{HWP}\left(C_M\left[\Z_n\setminus\left\{\frac{n}{2}\pm1, \frac{n}{2}\right\}\right]; 
M, Mn\right)$.
In view of Lemma \ref{6 regular}, 
$(0,3)\in \mathrm{HWP}\left(C_M\left[\left\{\frac{n}{2}\pm1, \frac{n}{2}\right\}\right]; M, Mn\right)$,
therefore
$(n-\beta, \beta)\in \mathrm{HWP}(C_M[n]; M, Mn)$.

We finally assume that $Mn\equiv 2\tmod{4}$; hence, by assumption,
$M$ is odd, $n\equiv2\tmod{4}$, and $\beta>0$ is even. 
If $n=2$ then
$(0,2)\in\mathrm{HWP}(C_M[2]; M, 2M)$ by Lemma \ref{hagg}. Therefore, we can assume that $n>2$, hence
 $\beta\geq4$ (condition 3).
First, let
$
T=
\left[
  \begin{array}{c}
    T_1     \\
    T_2     \\                
  \end{array}
\right]
$
be an $(n-2)\times 2$ matrix with entries in 
$\Z_n\setminus\{\frac{n}{2}-1,\frac{n}{2}\}$ where:
\[
T_1=
\left[
  \begin{array}{c}
    A(-\frac{n}{2}+3, -\frac{n}{2}+\alpha+2)             \\
    B(-\frac{n}{2}+\alpha+3, \frac{n}{2}-2)             \\                
  \end{array}
\right] \;\;\;\text{and}\;\;\;
T_2=
\left[
  \begin{array}{cc}
    -\frac{n}{2}+1 &  \frac{n}{2}-2              \\   
    -\frac{n}{2}+2 & -\frac{n}{2}+1             \\               
  \end{array}
\right].
\]
Note that each column of $T$ is a permutation of 
$\Z_n\setminus\{\frac{n}{2}-1, \frac{n}{2}\}$; also,
each of the first $\alpha$ rows of $T_1$ sums to $0$, whereas each of the following $\beta-4$ rows
sums to $\pm1$. 

We now construct an $(n-2)\times 3$ matrix $T'$ as follows. By Corollary \ref{skolem cor}
there is a Skolem sequence $\{(a_i,b_i)\mid i\in[0,\frac{n}{2}-2]\}$ covering
$[-\frac{n}{2}+2,\frac{n}{2}-2]$.  
As before, to construct $T'$ we replace each element of the second column of T, 
say $i\in[-\frac{n}{2}+1, \frac{n}{2}-2]$,  with the pair 
$(x_i,y_i)$ defined below:
\begin{equation}
  (x_i,y_i)=
  \begin{cases}
    (b_i, -a_i) & \text{if $i\in[0,\frac{n}{2}-2]$}, \\
    (a_i, -b_i) & \text{if $i\in-[1,\frac{n}{2}-2]$}, \\    
   (-\frac{n}{2}+1, -\frac{n}{2}+1) & \text{if $i=-\frac{n}{2}+1$}.
  \end{cases}
\end{equation}
It is not difficult to check that each of the columns of $T'$ uses up all integers in 
$[-\frac{n}{2}+1,\frac{n}{2}-2]$, that is, each of them is a permutation of 
$\Z_n\setminus\{\frac{n}{2}, \frac{n}{2}-1\}$. We also point out that the substitution $i\mapsto (x_i, y_i)$ preserves the sum of each row, except that the last row of $T'$ sums to $\frac{n}{2}+4$ which is coprime to $n$. 
Therefore,  by applying 
Theorem \ref{matrix} to $T$, it follows that 
$(n-\beta, \beta-2)\in \mathrm{HWP}(C_M[\Z_n\setminus\{\frac{n}{2}-1, \frac{n}{2}\}]; M, Mn)$.
By Lemma \ref{4reg}, 
$(0,2)\in \mathrm{HWP}(C_M[\{\frac{n}{2}-1, \frac{n}{2}\}]; M, Mn)$,
therefore
$(n-\beta, \beta)\in \mathrm{HWP}(C_M[n]; M, Mn)$.
\end{proof}

Lemmas  \ref{C_M[n] n odd} -- \ref{C_M[n] n even beta=Mn/2+1} clearly yield the following result.

\begin{theorem}
  \label{C_M[n]}
Let $n\geq 2$, $M\geq3$, and $0\leq y\leq n$.
Then $(n-y,y) \in \mathrm{HWP}(C_M[n];$ $M,Mn)$ except possibly when at least one of the following conditions  holds:
\begin{enumerate}
\item $y=1$ and $(n,(-1)^M)\neq (2,-1)$; 
\item $y=2<n\equiv2\tmod{4}$ and $M$ is odd;
\item $(y,n)\in\{(0,2),(0,6)\}$ and $M$ is odd.
\end{enumerate}
\end{theorem}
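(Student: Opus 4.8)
The plan is to obtain Theorem~\ref{C_M[n]} purely as a consolidation of Lemmas~\ref{C_M[n] n odd}--\ref{C_M[n] n even beta=Mn/2+1}: no new factorization need be built, and the only real work is a case analysis on the parity of $n$ and on the value of $y$, at the end of which one checks that the union of the excluded cases coming from the four lemmas is exactly the list (1)--(3). Throughout I write $\beta=y$ for the number of $C_{Mn}$-factors, so a solution is a point $(n-y,y)\in\mathrm{HWP}(C_M[n];M,Mn)$.

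First I would dispose of $n$ odd. Here Lemma~\ref{C_M[n] n odd} gives $(n-y,y)$ for every $0\le y\le n$ except possibly $y=1$, and since $n$ odd forces $n\neq 2$ we have $(n,(-1)^M)\neq(2,-1)$; thus the single residual case is exactly case~(1). Next, for $n$ even, I would peel off $y=0$ using Lemma~\ref{C_M[n] n even beta=0}, which yields $(n,0)$ except when $M$ is odd and $n\in\{2,6\}$ (with no solution for $n=2$, possibly none for $n=6$); these are precisely the instances $(y,n)\in\{(0,2),(0,6)\}$ of case~(3). The substance lies in the range $y\ge 1$ with $n$ even, handled by Lemmas~\ref{C_M[n] n even beta=Mn/2} and~\ref{C_M[n] n even beta=Mn/2+1}.

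The key bookkeeping is that $\frac{Mn}{2}$ is an integer whose parity governs which lemma applies: $Mn\equiv 0\pmod 4$ iff $\frac{Mn}{2}$ is even, while $Mn\equiv 2\pmod 4$ iff $\frac{Mn}{2}$ is odd, i.e. $M$ odd and $n\equiv 2\pmod 4$. For each $y$ exactly one of $y\equiv\frac{Mn}{2}\pmod 2$ and $y\equiv\frac{Mn}{2}+1\pmod 2$ holds, so exactly one of the two lemmas applies unless its auxiliary restriction on $y$ is violated. In the branch $y\equiv\frac{Mn}{2}\pmod 2$, Lemma~\ref{C_M[n] n even beta=Mn/2} fails to apply only when $Mn\equiv 2\pmod 4$, $n>2$, and $y=1$; but $y=1\equiv\frac{Mn}{2}$ then forces $M$ odd and $n\equiv 2\pmod 4$ with $n>2$, so $(n,(-1)^M)\neq(2,-1)$ and this is case~(1). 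In the branch $y\equiv\frac{Mn}{2}+1\pmod 2$, Lemma~\ref{C_M[n] n even beta=Mn/2+1} fails to apply only for ($Mn\equiv 0\pmod 4$, $y=1$) or ($Mn\equiv 2\pmod 4$, $n>2$, $y=2$); the former forces $\frac{Mn}{2}$ even, hence $n\ge 4$ or $M$ even, so again $(n,(-1)^M)\neq(2,-1)$ and we land in case~(1), while the latter forces $M$ odd and $n\equiv 2\pmod 4$ with $2<n$, which is case~(2).

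I expect the delicate point to be exactly this matching of the excluded families, and in particular the corner $n=2$, $M$ odd: there $y=1$ satisfies $y\equiv\frac{Mn}{2}\pmod 2$, and because the restriction $y\neq 1$ in Lemma~\ref{C_M[n] n even beta=Mn/2} is active only when $n>2$, the factorization does exist for $(n,M)=(2,\textup{odd})$. This is precisely why case~(1) must carry the clause $(n,(-1)^M)\neq(2,-1)$ rather than excluding all of $y=1$. Once this is pinned down, the list (1)--(3) is seen to account for every case left open by the four lemmas and for nothing more, which completes the proof.
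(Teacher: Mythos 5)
Your proposal is correct and follows exactly the paper's route: the paper derives Theorem~\ref{C_M[n]} directly from Lemmas~\ref{C_M[n] n odd}--\ref{C_M[n] n even beta=Mn/2+1} (stating only that they ``clearly yield'' it), and your parity bookkeeping --- including the observation that the corner $n=2$, $M$ odd, $y=1$ is covered by Lemma~\ref{C_M[n] n even beta=Mn/2} because its restriction $\beta\neq 1$ is active only for $n>2$, which explains the clause $(n,(-1)^M)\neq(2,-1)$ --- is exactly the verification the paper leaves implicit.
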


\section{Determining HWP($v; M, Mn$)}
\label{Main section}
In this section we prove the main result of this paper which concerns the existence of a solution to
HWP($K_v^*; M,N; \alpha, \beta$) when $M$ is a divisor of $N$.
Note that when $\alpha=0$ or $\beta=0$,
 this problem is equivalent to determining a $C_\ell$-factorization of $K_v^*$  and in this case a complete solution 
is provided by Theorem \ref{OP uniform}.

We denote by HW($G; M, N; \alpha, \beta)$ any solution to HWP($G; M, N; \alpha, \beta)$, that is,
any factorization of $G$ into $\alpha$ $C_M$-factors and $\beta$ $C_N$-factors.
We first prove the following lemma which provides sufficient conditions for the existence of an
HW($G; M, N; \alpha, \beta)$ for a given graph $G$.
%
%
%

\begin{lemma} \label{lemma:C_m[gn]-factorization}
Let $M,N,\alpha,\beta$ be positive integers with $M$ being a divisor of $N$ and $N>M\geq 3$. Also,
assume that $G$ has a factorization into $r\geq2$ $C_{M}[n]$-factors where $n=N/M$.
Then, $(\alpha, \beta)\in HWP(G; M, N)$ if and only if $\alpha+\beta=rn$,
except possibly when at least one of the following conditions holds:
\renewcommand{\theenumi}{(\roman{enumi})}
\begin{enumerate}
\item \label{general1} $\beta=1$;
\item \label{general2} $\beta=2<n\equiv2\tmod{4}$ and $M$ is odd;
\item \label{general3} $n=2$, $M$ is even, and $\beta$ is odd;
\item \label{general4} $n=2$, $M$ is odd, and $\beta<r$;
\item \label{general5} $n=6$, $M$ is odd, and $\beta<3r$.
\end{enumerate}
\end{lemma}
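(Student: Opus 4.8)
The plan is to assemble the required factorization of $G$ by splitting each of the $r$ given $C_M[n]$-factors independently. First I would dispose of necessity: each $C_M[n]$ is $2n$-regular, so $G$ is $2rn$-regular, and every $2$-factorization of $G$ uses exactly $rn$ factors; since $C_M$- and $C_N$-factors are $2$-factors, $\alpha+\beta=rn$ is forced, giving the ``only if'' direction.

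For sufficiency, fix one factor $\mathcal{C}$; it is a vertex-disjoint union of copies $H_1,\dots,H_k$ of $C_M[n]$ spanning $V(G)$. Call $y\in[0,n]$ \emph{admissible} if $(n-y,y)$ avoids all three exceptional cases of Theorem~\ref{C_M[n]}, so that (as $N=Mn$) each $H_j$ splits into $n-y$ $C_M$-factors and $y$ $C_N$-factors. Applying the \emph{same} admissible $y$ to every $H_j$ and taking, for each index, the union over $j$ of the corresponding factors turns $\mathcal{C}$ into exactly $n-y$ $C_M$-factors and $y$ $C_N$-factors of $G$. Assigning an admissible $y_i$ to the $i$-th factor therefore yields a factorization of $G$ into $rn-\beta$ $C_M$-factors and $\beta$ $C_N$-factors, where $\beta=\sum_{i=1}^r y_i$. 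The problem thus reduces to deciding, with $Y\subseteq[0,n]$ the set of admissible values, exactly which $\beta$ lie in the $r$-fold sumset of $Y$.

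I would then read off $Y$ from Theorem~\ref{C_M[n]} in each parity/residue case. In all cases $y\in[3,n]$ is admissible, and $y=1$ is admissible only for $(n,M)=(2,\text{odd})$. When $M$ is even with $n\ge3$, or $M$ is odd with $n\ge3$ and $n\not\equiv2\pmod4$, both $0$ and $2$ survive, giving $Y=\{0,2,3,\dots,n\}$; a short postage-stamp argument (using $2,3\in Y$ and padding unused slots with $0$, valid since $n\ge3$) shows the sumset is $\{0\}\cup[2,rn]$, so the only missing $\beta\ge1$ is $\beta=1$, exactly exception~\ref{general1}. When $M$ is odd, $n\equiv2\pmod4$ and $n>6$ (hence $n\ge10$), the value $2$ is barred but $0$ survives, giving $Y=\{0,3,4,\dots,n\}$ with sumset $\{0\}\cup[3,rn]$; the missing values $\beta\in\{1,2\}$ are exactly exceptions~\ref{general1} and~\ref{general2}. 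For $M$ odd and $n=6$ both $0$ and $2$ are barred, so $Y=\{3,4,5,6\}$ and the sumset is $[3r,6r]$, missing precisely $\beta<3r$, which is exception~\ref{general5}. Finally, for $n=2$ one has $Y=\{1,2\}$ if $M$ is odd (sumset $[r,2r]$, missing $\beta<r$, exception~\ref{general4}) and $Y=\{0,2\}$ if $M$ is even (sumset the even integers in $[0,2r]$, missing the odd $\beta$, exception~\ref{general3}).

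The main obstacle is organizational rather than deep: correctly extracting $Y$ from the three exceptional cases of Theorem~\ref{C_M[n]} in each case, and verifying the sumset descriptions---above all that $Y\supseteq\{2,3,\dots,n\}$ (resp.\ $\{3,\dots,n\}$) forces every integer from $2$ (resp.\ $3$) up to $rn$ to be attainable, which is where I would spend the most care. This comes down to checking that, for $s=\lceil\beta/n\rceil\le r$ terms, the lower bound $2s\le\beta$ (resp.\ $3s\le\beta$) holds whenever $n\ge3$ (resp.\ $n\ge5$), so the target splits into admissible summands. It then remains to confirm that, after imposing $\alpha=rn-\beta\ge1$, the residual non-attainable $\beta$ match the five listed exceptions exactly, with the small cases $n\in\{2,6\}$ and the $n\equiv2\pmod4$ case carefully peeled off from the generic one.
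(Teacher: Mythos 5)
Your proposal is correct and follows essentially the same route as the paper: both reduce the problem to applying Theorem~\ref{C_M[n]} independently to each of the $r$ given $C_M[n]$-factors and then distributing $\beta$ among them, with your ``admissible set $Y$ plus $r$-fold sumset'' framing being just a cleaner bookkeeping of the paper's explicit case analysis on $\beta=xn+y$. Your extraction of $Y$ from the three exceptional cases of Theorem~\ref{C_M[n]} and the resulting sumsets match the five listed exceptions exactly.
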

\begin{proof} Set $n=N/M$ and note that $n\geq2$ since $N>M$. By assumption, $G$ has a $C_{M}[n]$-factorization 
$\mathcal{\mathcal{G}}=\{G_1,$ $G_2, \ldots,$ $G_r\}$ with $r\geq 2$. It follows that 
$G$ is a regular graph of degree $2rn$. Now note that if $(\alpha, \beta)\in \mathrm{HWP}(G; M, N)$,
then $G$ has degree $2(\alpha+\beta)$, therefore $\alpha+\beta=rn$. 

We now show sufficiency; hence, we assume that $\alpha+\beta=rn$. 
We will proceed by applying Theorem \ref{C_M[n]} to factorize each 
of the $r$ $C_{M}[n]$-factors $G_i$ into an HW$(G_i; M, N; \alpha_i, \beta_i)$ where 
$\alpha=\sum_i \alpha_i$ and $\beta = \sum_i \beta_i$ for $i\in[1,r]$. Clearly,
this will result in an HW$(G; M, N; \alpha, \beta)$.

Set $\beta = xn + y$, with $0 \leq x < r$ and $0 \leq y < n$; note that by assumption $\beta>0$, and by exception 
$(i)$ we have that $\beta\neq 1$, hence 
$(x,y)\not\in\{(0,0), (0,1)\}$. We first assume that $n\not\in\{2,6\}$.
By taking into account exceptions $(ii)$, the following condition holds:
\begin{enumerate}
\item[$(a)$]  if $(x,y)=(0,2)$ $(i.e., \beta=2$) and $M$ is odd, then $n\not\equiv 2\tmod{4}$.
\end{enumerate}
We start with the case where $y\not\in\{1,2\}$ and  
apply Theorem \ref{C_M[n]} to 
fill $x$ $C_{M}[n]$-factors with an HW$(C_{M}[n];$ $M, Mn;  0, n)$,
     one $C_{M}[n]$-factor with an HW$(C_{M}[n];$ $M, Mn;$ $n-y,y)$,
and  the rest with an HW$(C_{M}[n];$ $M, Mn;$ $n, 0)$. 
If $(x,y)=(0,2)$, then in view of condition $(a)$ we can apply
Theorem \ref{C_M[n]} to 
fill $1$ $C_{M}[n]$-factor with an HW$(C_{M}[n];$ $M, Mn;  n-y, y)$
and  the rest with an HW$(C_{M}[n];$ $M, Mn;$ $n, 0)$. 
We finally consider the case where  $y\in\{1,2\}$ and $x>0$.
We again apply Theorem \ref{C_M[n]} to 
fill $x-1$ $C_{M}[n]$-factors with an HW$(C_{M}[n];$ $M, Mn;  0, n)$. If $n\geq 4$, we proceed by filling
     one $C_{M}[n]$-factor with an HW$(C_{M}[n];$ $M, Mn;$ $2,n-2)$ and
     one $C_{M}[n]$-factor with an HW$(C_{M}[n];$ $M, Mn;$ $n-y-2,y+2)$,     
If $n=3$ and $y=1$, then we proceed by filling two $C_{M}[n]$-factors with an HW$(C_{M}[n];$ $M, Mn;$ $1,2)$. 
If $n=3$ and $y=2$, then we fill
     one $C_{M}[n]$-factor with an HW$(C_{M}[n];$ $M, Mn;$ $0,3)$ and
     one $C_{M}[n]$-factor with an HW$(C_{M}[n];$ $M, Mn;$ $3-y,y)$,     
We fill the remaining $r-x-1$ $C_{M}[n]$-factors
with an HW$(C_{M}[n];$ $M, Mn;$ $n, 0)$. 

Now, we consider the case where $n\in\{2,6\}$ and $M$ is even. 
Note that when $n=2$, then $\beta$ is even 
(exception $(iii)$), that is, $y=0$. If $y\neq 1$, then
we apply Theorem \ref{C_M[n]} to 
fill $x$ $C_{M}[n]$-factors with an HW$(C_{M}[n];$ $M, Mn;  0, n)$, 
one $C_{M}[n]$-factor with an HW$(C_{M}[n];$ $M, Mn;$ $n-y,y)$,      
and  the rest with an HW$(C_{M}[n];$ $M, Mn;$ $n, 0)$. 
If $y = 1$, then $n=6$ and $x>0$ (since $(x,y)\neq (0,1)$). We 
apply again Theorem \ref{C_M[n]} to 
fill $x-1$ $C_{M}[n]$-factors with an HW$(C_{M}[n];$ $M, Mn;  0, n)$, 
one $C_{M}[n]$-factor with an HW$(C_{M}[n];$ $M, Mn;$ $1, n-1)$, 
one $C_{M}[n]$-factor with an HW$(C_{M}[n];$ $M, Mn;$ $n-2, 2)$, 
and  the rest with an HW$(C_{M}[n];$ $M, Mn;$ $n, 0)$. 

We finally assume that  $n\in\{2,6\}$ and $M$ is odd, and
set $\beta = x' r + y'$, with $0 \leq x' < n$ and $0 \leq y'< r$. 
In view of exceptions $(iv)-(v)$ we have that 
$x'\geq 1$  when $n=2$, and $x'\geq 3$  when $n=6$.
We can then apply Theorem \ref{C_M[n]} to 
fill $y'$ $C_{M}[n]$-factors with an HW$(C_{M}[n];$ $M, Mn;  n-x'-1, x'+1)$
and the remaining $(r-y')$ 
$C_{M}[n]$-factors with an HW$(C_{M}[n];$ $M, Mn;  n-x', x')$, and this completes the proof.
\end{proof}

We are now ready to prove the main result of this paper.\\

\noindent
\textbf{Theorem \ref{main}.}
\emph{
  Let $M,N,v, \alpha,\beta$ be positive integers such that $N>M\geq 3$ and $M$ is an odd divisor of $N$.
  Then, $(\alpha, \beta)\in \mathrm{HWP}(v; M, N)$ if and only if 
  $N\mid v$ and $\alpha+\beta=\lfloor\frac{v-1}{2}\rfloor$ except possibly when at least one of the following conditions holds:
\renewcommand{\theenumi}{(\roman{enumi})}  
  \begin{enumerate}
  \item $\beta=1$;
  \item $\beta=2$, $N\equiv 2M \tmod{4M}$;
  \item $N\in\{2M, 6M\}$;
  \item $v\in\{N, 2N, 4N\}$;
  \item $(M,v)=(3,6N)$. 
  \end{enumerate}
}
\begin{proof}
  We first note that by Theorem \ref{nec} if $(\alpha, \beta)\in {\rm HWP}(v; M, N)$, then  necessarily
$\alpha+\beta = \lfloor\frac{v-1}{2}\rfloor$, and both $M$ and $N$ are divisors of $v$.

We now show sufficiency; therefore, let $(v,M,N,\alpha, \beta)$ a quintuple which satisfies the assumptions of this theorem. Therefore, $v=Mns$ where $n=N/M$ and $s$ is a suitable positive integer. 
Also, in view of the possible exceptions $(i)-(v)$, we can assume that the following conditions simultaneously hold:
\begin{equation}\label{exceptions}
\begin{aligned}
  & \text{$\beta\neq 1$, $\beta\neq 2$ when $n\equiv 2\tmod{4}$, $n\not\in\{2,6\}$,} \\
  & \text{$s\not\in\{1,2,4\}$, and $(M,s)\neq (3,6)$.}
\end{aligned}
\end{equation}
We now set $w=Mn\frac{s}{t}$
where $t=s$ if $s$ is odd, otherwise $t=s/2$.
Note that in both cases we have $t\geq3$, since $s\not\in\{1,2,4\}$.

We factorize $K_{v}^*$ into 
$G_0=tK^*_{w}$  and $G_1=K_t[w]$.
 We start by applying Theorem~\ref{OP uniform} which guarantees the existence 
 of either a $C_{M}$- or a $C_{N}$-factorization of $G_0$ as we choose.
Therefore, this step will yield either  $\gamma$
$C_{M}$-factors or $\gamma$ $C_{N}$-factors decomposing $G_0$,
where 
$\gamma=\left\lfloor\frac{w-1}{2}\right\rfloor$.
More precisely, 
let $(\alpha_0, \beta_0)$ be the pair defined as follows:
\[(\alpha_0, \beta_0)=
\begin{cases}
  (\gamma,0) & \text{if $\beta< \gamma+3$,} \\
  (0,\gamma) & \text{if $\beta\geq \gamma+3$,} \\  
\end{cases}
\]
and apply Theorem~\ref{OP uniform} to fill 
$G_0$ with an $HW(G_0; M, N; \alpha_0,\beta_0)$. Since  $(M,s)\neq (3,6)$, by applying Corollary \ref{LiuGen} with $z=M\frac{s}{t}$ we obtain a $C_{M}[n]$-factorization of $K_t[w]$ containing at least three factors.
By taking into account Lemma 
\ref{lemma:C_m[gn]-factorization} and conditions \eqref{exceptions}, it follows that 
there exists an $\mathrm{HW}(G_1;$ $M,N;\alpha-\alpha_0,\beta-\beta_0)$ which we use to fill $G_1$ and this completes the proof.
\end{proof}

We point out that the above result has been proven in \cite{BDT2} in the case in which both $M$ and $N$ are odd,
but gives new results when $M$ is odd and $N$ is even.

The following corollary easily follows.

\begin{cor} Let $M\geq 3$ be an odd divisor of $N$. 
The necessary conditions for the solvability of  $\mathrm{HWP}(v;$ $M, N;$ $\alpha, \beta)$  are sufficient 
whenever $v>6N>36M$ and $\beta\geq 3$. 
\end{cor}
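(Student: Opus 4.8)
The plan is to read the corollary off Theorem~\ref{main} directly, the only work being to check that the single chain $v>6N>36M$, together with $\beta\geq 3$, kills every one of the five possible exceptions listed there, plus a separate treatment of the degenerate case $\alpha=0$. First I would unpack the hypotheses. Dividing $6N>36M$ by $6$ gives $N>6M$, and since $M\mid N$ this forces $N\geq 7M\geq 21$; in particular $N>6M>M\geq 3$ and $M$ is an odd divisor of $N$, so the standing hypotheses of Theorem~\ref{main} hold. Likewise $v>6N$ gives $v>6N>4N>2N>N$. The necessary conditions from Theorem~\ref{nec} reduce, since $M\mid N\mid v$ and $M,N\geq 3$, to $N\mid v$ together with $\alpha+\beta=\lfloor\frac{v-1}{2}\rfloor$, so it suffices to realize every such pair $(\alpha,\beta)$ with $\beta\geq 3$.

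Next I would split on whether $\alpha=0$. If $\alpha=0$ the problem is simply asking for a $C_N$-factorization of $K_v^*$; since $N\mid v$ and $N\geq 21>3$, the excluded orders $K_6^*$ and $K_{12}^*$ of Theorem~\ref{OP uniform} cannot occur, and the factorization exists. If instead $\alpha\geq 1$, then $\alpha,\beta,M,N,v$ are all positive and I may apply Theorem~\ref{main}, checking that none of its exceptions can hold: (i)~$\beta=1$ and (ii)~$\beta=2$ are excluded by $\beta\geq 3$; (iii)~$N\in\{2M,6M\}$ is impossible because $N>6M$; (iv)~$v\in\{N,2N,4N\}$ is impossible because $v>6N$; and (v)~$(M,v)=(3,6N)$ is impossible because $v>6N$. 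Theorem~\ref{main} then gives $(\alpha,\beta)\in\mathrm{HWP}(v;M,N)$, completing the argument.

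There is essentially no genuine obstacle here, since the statement is an immediate specialization of Theorem~\ref{main}; the only points requiring care are remembering that Theorem~\ref{main} assumes $\alpha$ positive, so the case $\alpha=0$ must be sent to Theorem~\ref{OP uniform} separately, and verifying that the derived bound $N\geq 21$ is large enough to avoid the two small exceptional orders appearing there.
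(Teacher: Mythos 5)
Your proposal is correct and matches the paper's (implicit) argument: the paper simply states that the corollary ``easily follows'' from Theorem~\ref{main}, and your verification that $v>6N>36M$ and $\beta\geq 3$ rule out all five listed exceptions is exactly the intended reading. Your separate dispatch of the $\alpha=0$ case to Theorem~\ref{OP uniform} is also consistent with the paper's remark at the start of Section~\ref{Main section} that the uniform case is covered there.
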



\section*{Acknowledgements}

The authors gratefully acknowledge support from the following sources.  A.C.\ Burgess and P.\ Danziger have received support from NSERC Discovery Grants RGPIN-435898-2013 and RGPIN-2016-04178, respectively. 
T. Traetta is a Marie Curie fellow of the Istituto Nazionale di Alta Matematica and gratefully acknowledges their support.

\end{document}